\newcommand*\patchAmsMathEnvironmentForLineno[1]{%
  \expandafter\let\csname old#1\expandafter\endcsname\csname #1\endcsname
  \expandafter\let\csname oldend#1\expandafter\endcsname\csname end#1\endcsname
  \renewenvironment{#1}%
     {\linenomath\csname old#1\endcsname}%
     {\csname oldend#1\endcsname\endlinenomath}}%
\newcommand*\patchBothAmsMathEnvironmentsForLineno[1]{%
  \patchAmsMathEnvironmentForLineno{#1}%
  \patchAmsMathEnvironmentForLineno{#1*}}%
\numberwithin{equation}{section}
\newtheorem{thm}{Theorem}[section]
\newtheorem{prop}[thm]{Proposition}
\newtheorem{lem}[thm]{Lemma}
\theoremstyle{remark}
\newtheorem{rem}{Remark}[section]
\newtheorem{defn}{Definition}
\newcommand{\laplacian}{\Delta}
\newcommand{\R}{{\mathbb R}}
\newcommand{\N}{{\mathbb N}}
\newcommand{\C}{{\mathbb C}}
\newcommand{\LR}[1]{{\langle {#1} \rangle }}
\newcommand{\cross}{\times}
\newcommand{\e}{\varepsilon}
\newcommand{\F}{\mathcal{F}}
\newcommand{\ha}{\widehat}
\newcommand{\supp}{\operatorname{supp}}
\title[Well-posedness of the modified Zakharov-Kuznetsov equation]{Well-posedness\\ for 
the Cauchy problem of \\
the modified Zakharov-Kuznetsov equation
}
\author[S.  Kinoshita]{Shinya Kinoshita}
\address[Shinya Kinoshita]{Universit\"{a}t Bielefeld
Fakult\"{a}t f\"{u}r Mathematik
Postfach 10 01 31
33501 Bielefeld
Germany}
\email[Shinya Kinoshita]{kinoshita@math.uni-bielefeld.de}
\subjclass[2010]{35Q53, 35B30}
\keywords{well-posedness, Cauchy problem, low regularity}
\begin{document}

\begin{abstract}
This paper is concerned with the Cauchy problem of the modified Zakharov-Kuznetsov equation on $\R^d$. 
If $d=2$, we prove the sharp estimate which implies local in time well-posedness in the Sobolev space $H^s(\R^2)$ for $s \geq 1/4$. If $d \geq 3$, by employing $U^p$ and $V^p$ spaces, we establish the small data global well-posedness in the scaling critical Sobolev space $H^{s_c}(\R^d)$ where $s_c = d/2-1$.
\end{abstract}
\maketitle
\setcounter{page}{001}


\section{Introduction}
We consider the Cauchy problem of the generalized Zakharov-Kuznetsov equation
\begin{equation}
 \begin{cases}
  \partial_t u + \partial_{x_1} \laplacian u =  \partial_{x_1} (u^{k+1}), \quad (t,x_1, \cdots, x_d) \in \R\cross \R^d, \\
  u(0, \cdot) = u_0 \in H^s(\R^d),\label{gZK}
 \end{cases}  
\end{equation}
where $d \geq 2$, $k \in \N$, 
$u= u(t,x_1, \cdots, x_d)$ is a real valued function and $\laplacian = \partial_{x_1}^2 +
\cdots + \partial_{x_d}^2$ is the Laplacian. 
When $k=1$, \eqref{gZK} is called the Zakharov-Kuznetsov equation which was introduced by Zakharov and Kuznetsov in \cite{ZK74} as a model for the propagation of ion-sound waves in magnetic fields for $d=3$. See also \cite{LS82}. In \cite{LLS13}, Lannes, Linares and Saut derived the Zakharov-Kuznetsov equation in dimensions $2$ and $3$ rigorously as a long-wave limit of the Euler-Poisson system. 
The generalized Zakharov-Kuznetsov equation can be seen as a multi-dimensional extension of the generalized KdV equation
\begin{equation*}
\partial_t u + \partial_{x}^3 u + u^k  \partial_{x} u = 0, \quad (t,x) \in \R \cross \R.
\end{equation*}
There are lots of works on the generalized Zakharov-Kuznetsov equation \eqref{gZK}. 
For $d=2$, we refer to the papers \cite{BL03}, 
\cite{Fa95}, \cite{GH14}, \cite{Ki2019}, \cite{LP09}, \cite{MP15} 
for the case $k=1$, \cite{BFR19}, \cite{BL03}, \cite{FLP11}, \cite{LP09}, \cite{LP11}, \cite{RV12} for the case $k=2$, and 
see \cite{FLP11}, \cite{Gru15}, \cite{LP11}, \cite{RV12} for $k \geq 3$. 
For $d=3$, we refer to \cite{LS09}, \cite{RV12-2} for the case $k=1$, and \cite{Gru14} for $k=2$, and  \cite{Gru15} for $k \geq 3$.

The aim of the paper is to establish well-posedness of \eqref{gZK} when $k=2$:
\begin{equation}
 \begin{cases}
  \partial_t u + \partial_{x_1} \laplacian u =  \partial_{x_1} (u^{3}), \quad (t,x_1, \cdots, x_d) \in \R\cross \R^d, \\
  u(0, \cdot) = u_0 \in H^s(\R^d),\label{mZK}
 \end{cases}  
\end{equation}
which we call the modified Zakharov-Kuznetsov equation (mZK). 
The paper is divided into two parts. The first part is devoted to the well-posedess of the $2$D (mZK) and in the latter part we consider \eqref{mZK} for $d \geq 3$. 
The main reason for this is that when $d=2$, by performing a linear change of variables as in \cite{GH14}, \eqref{mZK} can be rewritten as follows.
\begin{equation}
 \begin{cases}
  \partial_t v + (\partial_x^3 + \partial_y^3) v = 4^{-\frac{1}{3}} 
(\partial_x+ \partial_y) (v^3), \quad (t,x,y) \in \R \cross \R^2, \\
  v(0, \cdot) = v_0 \in H^s(\R^2).\label{mZK'}
 \end{cases}  
\end{equation}
This can be observed by putting $x= 4^{-1/3}x_1 + \sqrt{3} 4^{-1/3} x_2$, 
$y= 4^{-1/3}x_1 - \sqrt{3} 4^{-1/3} x_2$ and $v(t,x,y) := u(t,x_1,x_2)$, 
$v_0(x,y) := u_0(x_1,x_2)$. 
It is clear that the above linear transformation $(x_1,x_2) \to (x,y)$ is invertible as a mapping $\R^2 \to \R^2$, 
which means that the Cauchy problem \eqref{mZK'} is equivalent to \eqref{mZK} when $d=2$. 
We will see that, because of the symmetry of $x$ and $y$, it is convenient that we consider the symmetrized equation \eqref{mZK'} instead of \eqref{mZK}. While for $d \geq 3$ there is no transformation to symmetrize the modified Zakharov-Kuznetsov equation.

We now state the main results. 
\begin{thm}  \label{mth}
Let $d=2$ and $s \geq1/4$. Then the Cauchy problem \eqref{mZK'} is locally well-posed in $H^{s}(\R^2)$.
\end{thm}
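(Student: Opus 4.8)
The plan is to solve \eqref{mZK'} by a contraction mapping argument for the associated Duhamel integral equation in a Bourgain-type space $X^{s,b}$ adapted to the dispersion relation $\omega(\xi,\eta)=\xi^{3}+\eta^{3}$, with norm $\|\LR{(\xi,\eta)}^{s}\LR{\tau-\omega(\xi,\eta)}^{b}\,\widehat{v}\,\|_{L^{2}_{\tau\xi\eta}}$ localized to a short time interval. For $s>1/4$ one may take $b>1/2$; at the endpoint $s=1/4$ one works at $b=1/2$ and, as is standard for third-order equations, intersects $X^{1/4,1/2}$ with the auxiliary space carrying the norm $\|\LR{(\xi,\eta)}^{1/4}\widehat{v}\|_{L^{2}_{\xi\eta}L^{1}_{\tau}}$, an $\ell^{1}$-Besov refinement in the modulation variable that keeps the retarded operator bounded and places the nonlinear term in the correct space. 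Granting the usual linear, $L^{2}$-duality and time-localization estimates in these spaces, local well-posedness for all $s\ge 1/4$ reduces to a single trilinear estimate of the form $\|(\partial_{x}+\partial_{y})(v_{1}v_{2}v_{3})\|\lesssim \prod_{j=1}^{3}\|v_{j}\|$ in the appropriate norms, and since $x$ and $y$ enter \eqref{mZK'} symmetrically it is enough to bound the $\partial_{x}$ contribution.

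By duality and Plancherel this is equivalent to a quadrilinear bound for $\big|\int |\xi_{0}|\prod_{j=0}^{3}\widehat{v_{j}}\big|$ over $\sum_{j}\tau_{j}=0$, $\sum_{j}(\xi_{j},\eta_{j})=(0,0)$, once the Sobolev weights are distributed among the four factors. The decisive algebraic input is the resonance identity: writing $\sigma_{j}=\tau_{j}-\omega(\xi_{j},\eta_{j})$, on that set one has $\sum_{j=0}^{3}\sigma_{j}=-\sum_{j=0}^{3}(\xi_{j}^{3}+\eta_{j}^{3})=3\big[(\xi_{1}+\xi_{2})(\xi_{2}+\xi_{3})(\xi_{3}+\xi_{1})+(\eta_{1}+\eta_{2})(\eta_{2}+\eta_{3})(\eta_{3}+\eta_{1})\big]$, so that $\max_{j}|\sigma_{j}|$ controls the magnitude $|R|$ of the bracketed quantity. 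In the one-dimensional mKdV model $R$ collapses to a single triple product, nonzero except when two frequencies are exactly opposite; in the present two-dimensional setting the two triple products can cancel along a genuinely two-dimensional curved resonant set, and handling that set is the crux.

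From here I would run a Littlewood--Paley decomposition in frequency, $|(\xi_{j},\eta_{j})|\sim N_{j}$, and a dyadic decomposition in modulation, $|\sigma_{j}|\sim L_{j}$, keeping $\max_{j}L_{j}\gtrsim |R|$. In the non-resonant regime, where $\max_{j}L_{j}$ is comparable to the generic size of $R$ (roughly $N_{\max}^{3}$), one puts the factor carrying the large modulation into $L^{2}_{t,x,y}$ via its weight and estimates the remaining threefold product by $L^{4}_{t,x,y}$ Strichartz estimates for the group $e^{-t(\partial_{x}^{3}+\partial_{y}^{3})}$ — and, after pairing two factors, by a bilinear Strichartz estimate — also cashing in the smoothing coming from the $|\xi_{0}|$ multiplier; a direct bookkeeping of the dyadic exponents shows the estimate closes precisely down to $s=1/4$. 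The substantial difficulty is the near-resonant regime $|R|\ll N_{\max}^{3}$, where no modulation is available: there I would exploit transversality of the cubic characteristic surfaces $\tau=\xi^{3}+\eta^{3}$, whose gradients $(3\xi^{2},3\eta^{2})$ give the normal directions, and apply a convolution estimate of nonlinear Loomis--Whitney / transversal-surface type (in the spirit of the estimates used for Zakharov--Kuznetsov equations): one partitions the frequency domain into pieces on which two of the interacting frequencies have quantitatively non-parallel gradients, so that the product of the associated surface measures gains $L^{2}$-integrability, the loss being measured by a transversality angle which is in turn controlled by $|R|$ not being too small. The remaining almost-parallel or coincident-frequency interactions — notably the mKdV-type resonance $\xi_{1}=-\xi_{2}$, $\eta_{1}=-\eta_{2}$ — must be dispatched separately, using the $|\xi_{0}|$ derivative and the available modulation weights.

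I expect this near-resonant, near-parallel case to be the main obstacle: one must quantify how the transversality of the two cubic surfaces degrades as $|R|\to 0$ and extract exactly the amount of smoothing needed, uniformly over all dyadic configurations. Carrying the worst configuration through the whole scheme yields the threshold $s=1/4$; combined with the failure of the corresponding trilinear and uniform-continuity estimates below $1/4$ — inherited from one-dimensional mKdV through the resonance $\xi_{1}=-\xi_{2}$, $\eta_{1}=-\eta_{2}$ — this shows the result, and the underlying estimate, is sharp. Note that $s=1/4$ lies well above the scaling-critical exponent $s_{c}=0$ for \eqref{mZK'}, so the obstruction is of nonlinear-interaction, not scaling, nature.
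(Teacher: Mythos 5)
Your overall architecture coincides with the paper's: Picard iteration in $X^{s,b}$ adapted to $\tau=\xi^3+\eta^3$, reduction by duality to a quadrilinear form, dyadic decomposition in frequency and modulation, Strichartz estimates for the large-modulation regime, and bilinear transversal estimates (via the gradient $\nabla\varphi=(3\xi^2,3\eta^2)$) for the rest. Two remarks before the main point. First, your endpoint setup at $s=1/4$ (working at $b=1/2$ and adjoining an $L^2_{\xi\eta}L^1_\tau$ auxiliary space) is unnecessary here: the trilinear estimate actually holds with $b>1/2$ and an extra gain $L^{-\e}$ in each modulation even at $s=1/4$, so the standard $X^{s,b}$ contraction closes without a Besov refinement. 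Second, your resonance identity is correct, but the proof does not organize the case analysis around $|R|$; it organizes it around which pairs of frequencies have transversal gradients, which is not the same thing and turns out to be the workable bookkeeping.

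The genuine gap is the case $N_1\sim N_2\sim N_3\sim N_4$ with the interaction nearly one-dimensional, i.e.\ $|\xi_i|\sim N_1$ for all $i$ and the $\eta_i$ small relative to $N_1$. You correctly flag this as the main obstacle but offer only the expectation that a nonlinear Loomis--Whitney/transversality gain, with loss controlled by $|R|$, will close it. That mechanism degenerates precisely here: when all $\eta_i$ are $O(L_{\max}^{c\e})$ the two characteristic surfaces become essentially parallel translates of the 1D mKdV curve and there is no usable angle. The paper instead performs an extra dyadic decomposition in the anisotropy parameter $A\sim N_1/|\eta|$ and proves two tailored bilinear $L^2$ estimates: one giving $A^{1/4}N_1^{-1/2}(L_1L_2)^{1/2}$ by splitting according to whether $|\xi_1^2-(\xi-\xi_1)^2|$ is large (a genuine $\partial_1\varphi$-transversality gain) or small ($\xi_1$ confined to a short interval, a measure/counting bound with no transversality at all); and, in the fully degenerate range $|\eta_i|\leq L_{\max}^{8\e}$, an estimate of the form $N_1^{-1/4}\bigl(L_{\max}^{-2\e}(L_1L_2)^{1/2}+L_{\max}^{10\e}\min(L_1,L_2)^{1/2}\bigr)$, whose second term deliberately gives up a half power of one modulation and is absorbed only because the quadrilinear target carries $(L_1L_2L_3L_4)^{1/2-\e}$ rather than the full half powers. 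Neither of these is a transversal-surface convolution estimate, and without them (or a substitute of equal precision) your scheme does not close at $s=1/4$ in exactly the configuration that determines the threshold. The claimed sharpness below $1/4$ is fine and matches the paper's $C^3$-failure argument built on the $\xi_1=-\xi_2$, $\eta$-collapsed resonance.
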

\begin{thm}\label{mth2}
Let $d \geq 3$. Then the Cauchy problem \eqref{mZK} is small data globally well-posed in $H^{s_c}(\R^d)$.
\end{thm}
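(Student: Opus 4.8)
The plan is to run a contraction argument for the Duhamel formulation
\[
u(t)\;=\;W(t)u_0\;-\;\int_0^t W(t-t')\,\partial_{x_1}\big(u^3\big)(t')\,dt',
\qquad W(t):=e^{-t\partial_{x_1}\laplacian},
\]
in a scaling critical function space modeled on the $U^p$ and $V^p$ spaces adapted to the linear ZK group $W(t)$. Writing $P_N$ for the Littlewood--Paley projection onto $|\xi|\sim N$, set
\[
\|u\|_{X^{s}}^2:=\sum_{N}N^{2s}\,\|P_N u\|_{U^2_W}^2 ,
\]
and let $Y^s$ be the companion space adapted to $W(t)$, so that $\big\|\int_0^t W(t-t')F(t')\,dt'\big\|_{X^s}\lesssim\|F\|_{Y^s}$ (this is the $U^2$--$V^2$ duality estimate for the Duhamel operator) and also $\|W(t)u_0\|_{X^s}\lesssim\|u_0\|_{H^s}$. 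Because the norms $X^{s_c}$, $Y^{s_c}$ are invariant under the scaling $u(t,x)\mapsto\lambda\,u(\lambda^3 t,\lambda x)$ preserving \eqref{mZK}, and because all the estimates below hold on all of $\R$, the fixed point is carried out globally in time and smallness of $\|u_0\|_{H^{s_c}}$ produces directly a global solution, unique and depending Lipschitz-continuously on the data. Everything therefore reduces to the trilinear estimate
\begin{equation}\label{trilinear}
\big\|\partial_{x_1}(u_1u_2u_3)\big\|_{Y^{s_c}}\;\lesssim\;\prod_{j=1}^3\|u_j\|_{X^{s_c}} .
\end{equation}

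To prove \eqref{trilinear} I would decompose $u_j=\sum_{N_j}P_{N_j}u_j$ and localize the output at frequency $N$; by symmetry assume $N_1\ge N_2\ge N_3$, so $N\lesssim N_1$ and either $N\sim N_1$ (high output) or $N_1\sim N_2\gtrsim N$ (high--high interaction). Computing the $V^2_W$-duality pairing that defines the $N$-th piece of the $Y^{s_c}$-norm against a test function $v$ with $\|P_N v\|_{V^2_W}\le1$ and invoking the transfer principle (which replaces $U^2_W$ and $V^2_W$ functions by superpositions of free evolutions), the task becomes a uniform bound for the quadrilinear space--time integral
\[
\Big|\int_{\R^{1+d}}\Big(\prod_{j=1}^3 W(t)P_{N_j}f_j\Big)\,\partial_{x_1}\overline{W(t)P_N f_4}\;dx\,dt\Big|,
\qquad\|f_j\|_{L^2}=1
\]
(the complex conjugates forced by reality of $u$ being harmless), against the appropriate product of the four frequencies. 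Pairing the highest factor with a well-separated lower one and applying Cauchy--Schwarz reduces this to the bilinear estimate for the ZK group: for frequency blocks transversal with respect to the characteristic hypersurface $\tau=\phi(\xi):=\xi_1(\xi_1^2+\cdots+\xi_d^2)$,
\begin{equation}\label{bilinear}
\big\|W(t)P_{M_1}f\cdot W(t)P_{M_2}g\big\|_{L^2_{t,x}}\;\lesssim\;C(M_1,M_2)\,\|f\|_{L^2}\|g\|_{L^2},
\end{equation}
where $C(M_1,M_2)$ is governed by the Jacobian of $\xi\mapsto\nabla\phi(\xi)$ on the two blocks, equivalently by the size of the resonance function $\phi(\xi+\eta)-\phi(\xi)-\phi(\eta)$, which for ZK carries the KdV-type factor $3\xi_1\eta_1(\xi_1+\eta_1)$ together with the curvature of the transverse paraboloid $\xi_1|\xi'|^2$; away from $\xi_1=0$ the Hessian of $\phi$ is nondegenerate, so \eqref{bilinear} has Schr\"{o}dinger strength, with an additional $x_1$-derivative gain reflecting the third-order local smoothing of $W(t)$ in the $x_1$ direction. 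When all four frequencies sit in a single dyadic shell I would first run a Whitney-type decomposition into cubes of side $\ll N$: the curvature of $\{\tau=\phi(\xi)\}$ makes generic pairs of cubes transversal so that \eqref{bilinear} still applies, and the few genuinely parallel pairs are absorbed by an $L^4$- or $L^6$-type Strichartz estimate for $W(t)$.

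The main obstacle is the degeneracy of $\phi$ on the hyperplane $\xi_1=0$, where the Hessian drops rank and both the dispersive decay and the transversality in \eqref{bilinear} deteriorate; since $d\ge3$ puts us exactly at the scaling exponent there is no regularity left to absorb this loss. The saving feature is the position of the derivative in \eqref{mZK}: in the regime where the interacting frequencies crowd the slab $|\xi_1|\ll N$, the output first component $|\xi_1^{\mathrm{out}}|$ contributed by $\partial_{x_1}$ is correspondingly small, whereas if $|\xi_1^{\mathrm{out}}|\sim N$ then some input already has $|\xi_1|\gtrsim N$ and \eqref{bilinear} is as strong as possible. Making this balance precise — slicing the $\xi_1$-variable dyadically inside each Littlewood--Paley piece, tracking which factor carries the $x_1$-derivative, playing the resonance function against the output size, and checking that the dyadic sums over $N,N_1,N_2,N_3$ and over the $\xi_1$-slabs converge with the exponents closing exactly at $s=s_c=d/2-1$ — is the technical core of the argument. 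Once \eqref{trilinear} holds with an absolute implied constant, the contraction mapping principle on a small ball of $X^{s_c}$ finishes the proof; the use of the $U^p$--$V^p$ spaces rather than $X^{s,b}$ with $b=\tfrac12$ (which fails to embed into $C_tH^{s_c}$) is precisely what makes the critical case reachable, and for $d=3$ the result pushes the earlier subcritical theory down to the scaling line.
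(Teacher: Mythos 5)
Your overall framework coincides with the paper's: a contraction argument in $U^2_S$/$V^2_S$-based critical spaces, reduction via the duality of Proposition \ref{prop-duality} to a quadrilinear space--time integral, and control of that integral by bilinear transversal estimates and Strichartz estimates for the group $e^{tS}$. However, there is a genuine gap: the trilinear estimate to which you reduce everything is never actually proven. You correctly identify the dangerous regime (all four frequencies in one dyadic shell, possible concentration near the degenerate hyperplane $\xi_1=0$ where the Hessian of $\phi(\xi)=\xi_1|\xi|^2$ drops rank), propose a Whitney decomposition plus a resonance/derivative-placement analysis to handle it, and then explicitly defer the verification (``making this balance precise \dots is the technical core of the argument''). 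Since at $s=s_c$ the entire theorem is equivalent to that estimate, what you have written is a plausible program, not a proof; in particular it is not checked that your proposed case analysis closes with the exponents summing at exactly $d/2-1$, nor that the ``few genuinely parallel pairs'' in the Whitney decomposition can really be absorbed by an $L^4$ or $L^6$ Strichartz estimate.

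It is worth noting that the paper's resolution of precisely this case is much simpler than what you envision, and this is the one substantive idea your proposal is missing. The $L^4$ Strichartz estimate of Proposition \ref{l4-strichartz-3d}, $\| e^{tS}\varphi\|_{L^4_{t,x}}\lesssim \||\nabla|^{(d-3)/4}\varphi\|_{L^2}$, transferred to $U^4_S$ as in \eqref{l4-strichartz-u4}, applied to all four factors gives the bound $N_1^{d-3}\prod_i\|u_{N_i}\|_{V^2_S}$ in the case $N_1\sim N_2\sim N_3\sim N_4$, and $N_1^{d-3}$ is exactly the exponent required by Proposition \ref{goal2-3d} (one checks $-\tfrac32+\tfrac{d-2}{2}+\tfrac{d-1}{2}=d-3$). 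So for $d\ge 3$ no Whitney decomposition, no transversality analysis within a single shell, and no special treatment of the slab $|\xi_1|\ll N$ are needed; the derivative loss $(d-3)/4$ in the $L^4$ estimate matches the critical scaling on the nose. The remaining cases with separated frequencies are handled by the bilinear estimate of Proposition \ref{bilinear-transversal-d3}, which requires only $N_2\ll N_1$ (not block transversality), interpolated against the $L^4$-based bound to pass from $U^2_S$ to $V^2_S$ at the cost of an $\e$ as in \eqref{bi-trans-v2}. To complete your argument you would either have to carry out your (harder) Whitney/resonance analysis in full, or replace it by this Strichartz input.
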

We give a comment on Theorem \ref{mth}. For $d=2$, in \cite{LP09}, Linares and Pastor proved the local well-posedness of (mZK) for $s > 3/4$. After that, 
the local well-posedness of the $2$D (mZK) for $s>1/4$, 
which is the best known result so far, was established by Ribaud and Vento in \cite{RV12}. 
The global results of $2$D (mZK) can be found in \cite{BFR19} and \cite{LP11}. 
When $d=2$, the scaling critical index $s_c$ of (mZK) is $0$. 
In \cite{LP09}, Linares and Pastor proved that \eqref{mZK'} is ill-posed in $H^s(\R^2)$ if $s \leq 0$ in the sense that the data-to-solution map fails to be uniformly continuous. 
As far as we know, there are no results for the case $0 < s \leq 1/4$. 
Theorem \ref{mth} establishes the well-posedness at $s=1/4$ which is in fact optimal for the Picard iteration approach, as the following theorem shows. 
\begin{thm}\label{not-c3}
Let $s <  1/4$. Then for any $T>0$, the data-to-solution map 
$ u_0 \mapsto u$ 
of \eqref{mZK'}, as a map from the unit ball in 
$H^s(\R^2)$ to 
$C([0,T]; H^{s})$ fails to be $C^3$.
\end{thm}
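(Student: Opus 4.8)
The plan is the standard Picard-iteration obstruction. Let $U(t):=\mathcal F^{-1}e^{it(\xi^3+\eta^3)}\mathcal F$ be the linear propagator of \eqref{mZK'}, and for $\phi\in H^s(\R^2)$ set
\[
A_3[\phi](t):=\int_0^t U(t-t')\,(\partial_x+\partial_y)\big((U(t')\phi)^3\big)\,dt'.
\]
First I would reduce the theorem to the failure of the trilinear estimate $\sup_{t\in[0,T]}\|A_3[\phi](t)\|_{H^s}\lesssim\|\phi\|_{H^s}^3$ on a small ball of $H^s(\R^2)$. This is routine: if the solution map $S$ were $C^3$ near $0$, then for fixed $\phi$ the curve $\lambda\mapsto S(\lambda\phi)$ solves the Duhamel equation of \eqref{mZK'} and is $C^3$ near $\lambda=0$; since $S(0)=0$, $\frac{d}{d\lambda}\big|_0 S(\lambda\phi)=U(\cdot)\phi$, and the nonlinearity is cubic, so differentiating Duhamel three times in $\lambda$ at $0$ gives $\frac{d^3}{d\lambda^3}\big|_0 S(\lambda\phi)=6\cdot4^{-1/3}A_3[\phi]$, and continuity of $D^3S$ at $0$ forces the displayed bound. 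Thus it suffices to construct $\phi=\phi_N$ with $\|\phi_N\|_{H^s}\lesssim1$ and $\|A_3[\phi_N](T)\|_{H^s}\to\infty$.

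The second step is the Fourier picture. With $\zeta=(\xi,\eta)$, $\zeta_j=(\xi_j,\eta_j)$, $\zeta_3:=\zeta-\zeta_1-\zeta_2$, and $\omega(\xi,\eta)=\xi^3+\eta^3$, one computes
\[
\widehat{A_3[\phi](t)}(\zeta)=c\,(\xi+\eta)\,e^{it\omega(\zeta)}\int_{\R^4}\frac{e^{itH}-1}{H}\,\widehat\phi(\zeta_1)\widehat\phi(\zeta_2)\widehat\phi(\zeta_3)\,d\zeta_1\,d\zeta_2,
\]
where $H:=\sum_{j=1}^3\omega(\zeta_j)-\omega(\zeta)$. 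The identity $(a+b+c)^3-a^3-b^3-c^3=3(a+b)(b+c)(c+a)$ gives
\[
H=-3(\xi_1+\xi_2)(\xi_2+\xi_3)(\xi_3+\xi_1)-3(\eta_1+\eta_2)(\eta_2+\eta_3)(\eta_3+\eta_1),
\]
so $H$ is small precisely when two of the three frequencies nearly cancel in each variable while the remaining pair-sums stay bounded; and when $|tH|\le\pi/2$ one has $\operatorname{Im}\frac{e^{itH}-1}{H}=\frac{\sin(tH)}{H}\ge\frac2\pi t>0$, so no oscillatory cancellation occurs.

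Third, I would build the counterexample. For $N$ large fix a small absolute constant $c_0$, put $a:=c_0(NT)^{-1/2}$, $b:=c_0T^{-1/3}$, $I^\pm:=\pm[N,N+a]$, $J:=[-b,b]$, and take $\widehat{\phi_N}:=\lambda_N\mathbf{1}_{(I^+\cup I^-)\times J}$ with $\lambda_N:=N^{-s}(ab)^{-1/2}$. Then $\phi_N$ is real-valued (the support is symmetric under $\zeta\mapsto-\zeta$) and $\|\phi_N\|_{H^s}^2\sim N^{2s}\lambda_N^2\,ab\sim1$. I would lower-bound $\|A_3[\phi_N](T)\|_{H^s}$ by restricting to the output box $B_N=\{\,\xi-N\in[a/4,3a/4],\ |\eta|\le b/4\,\}$. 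For $\zeta\in B_N$ the only configurations with $\widehat{\phi_N}(\zeta_1)\widehat{\phi_N}(\zeta_2)\widehat{\phi_N}(\zeta_3)\ne0$ have two of the $\xi_j$ in $I^+$ and one in $I^-$ (the other sign patterns put $\xi$ near $\pm3N$ or near $-N$), so no competing interaction contributes; on that set $|\xi_i+\xi_j|\lesssim N$ for the large pair and $\lesssim a$ for the two near-cancelling pairs, and $|\eta_i+\eta_j|\lesssim b$, whence $|H|\lesssim Na^2+b^3\lesssim c_0^2/T$, which is $\le\pi/(2T)$ for $c_0$ small. Since $\widehat{\phi_N}\ge0$, all contributions add coherently; estimating the four-dimensional measure of $\{(\zeta_1,\zeta_2):\zeta_1,\zeta_2,\zeta_3\in\supp\widehat{\phi_N}\}$ from below by $\gtrsim a^2b^2$ for each $\zeta\in B_N$ yields $|\widehat{A_3[\phi_N](T)}(\zeta)|\gtrsim N\,T\,\lambda_N^3\,a^2b^2$ on $B_N$. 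Integrating over $B_N$ (area $\sim ab$, weight $\sim N^{2s}$) and inserting $\lambda_N^6\sim N^{-6s}(ab)^{-3}$ and $a^2b^2\sim N^{-1}T^{-5/3}$ gives $\|A_3[\phi_N](T)\|_{H^s}^2\gtrsim_T N^{1-4s}$. For $s<1/4$ the right side tends to $\infty$ as $N\to\infty$, contradicting the trilinear bound, which proves the theorem.

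The hard part is the third step. One must (i) verify that $|H|\lesssim1/T$ holds on the \emph{whole} relevant interaction region — this is exactly what dictates the widths $a\sim(NT)^{-1/2}$ and $b\sim T^{-1/3}$ and hence the final exponent; (ii) bound from below the four-dimensional volume of the resonant set while correctly imposing that all three frequencies lie in $\supp\widehat{\phi_N}$; and (iii) ensure genuine non-cancellation, i.e. that distinct combinatorial pairings are supported in disjoint output regions and that $\operatorname{Im}\frac{e^{iTH}-1}{H}$ keeps a fixed sign on the surviving region — which is why $\widehat{\phi_N}$ is chosen nonnegative and $I^\pm$ symmetric. Tracking the powers of $N$ and $T$ is otherwise mechanical.
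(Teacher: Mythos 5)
Your proposal is correct and is essentially the paper's own argument (Bourgain's $C^3$-obstruction): the same initial data concentrated in frequency on $\pm[N,N+O(N^{-1/2})]$ in $\xi$ and an $O(1)$ interval in $\eta$, the same resonance identity making the phase $H=O(1)$ on the $(+,+,-)$ interaction, and the same volume count yielding $\|A_3[\phi_N](T)\|_{H^s}\gtrsim N^{(1-4s)/2}$. The only differences are cosmetic: you use sharp indicator functions and track the $T$-dependence explicitly, whereas the paper uses smooth bumps and simply takes $t$ sufficiently small.
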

\begin{proof}
We follow the Bourgain's argument which was introduced in \cite{Bo97}. See also Section 6 in \cite{Ho07}. 
It should be noted that the function we choose below is essentially the same as the one which was employed to show the not-$C^3$ result of the modified KdV equation in \cite{Bo97}. 

It suffices to show that 
if $s<1/4$ for any $C>0$ there exists a real-valued function $\varphi \in \mathcal{S}(\R^2)$ such that 
$\|\varphi\|_{H^s(\R^2)} \sim 1$ and
\begin{equation}
  \left\| \int_0^t{e^{-(t-t')(\partial_\xi^3 + \partial_\eta^3)}  (\partial_x + \partial_y) \left( 
(e^{-t'(\partial_\xi^3 + \partial_\eta^3)}\varphi) \, (e^{-t'(\partial_\xi^3 + \partial_\eta^3)} \varphi) \, 
(e^{-t'(\partial_\xi^3 + \partial_\eta^3)} \varphi)\right) } 
d t'\right\|_{H^{s}}
 \geq C.\label{goal-thm1.3}
\end{equation}
Define real-valued even functions $\psi_{N,\xi}$, $\psi_{\eta} \in \mathcal{S}(\R)$ as
\begin{equation*}
\psi_{N,\xi}(\xi) = 
\begin{cases}
1 \ \ \textnormal{if} \ \ N \leq |\xi| \leq N+ N^{-{\frac{1}{2}}}\\
0 \ \ \textnormal{if} \ |\pm \xi-N - 2^{-1}N^{-\frac{1}{2}} | \geq N^{-\frac{1}{2}},
\end{cases}
\psi_{\eta}(\eta) = 
\begin{cases}
1 \ \ \textnormal{if} \ |\eta| \leq 1\\
0 \ \ \textnormal{if} \ |\eta | \geq 2,
\end{cases}
\end{equation*}
and $\varphi_N$ by $(\F_{x,y} \varphi_N )(\xi,\eta)=  N^{-s + 1/4}\psi_{N,\xi}(\xi) \psi_{\eta} (\eta)$. 
Then $\|\varphi_N \|_{H^s(\R^2)} \sim 1$. 
Let
\begin{equation*}
\Phi(\xi_1,\xi_2,\xi_3):= (\xi_1+\xi_2+\xi_3)^3-\xi_1^3 - \xi_2^3 - \xi_3^3.
\end{equation*}
We easily observe that if 
\begin{equation*} 
|\xi_1-N - 2^{-1}N^{-\frac{1}{2}} | < N^{-\frac{1}{2}}, \ 
|\xi_2-N - 2^{-1}N^{-\frac{1}{2}} | < N^{-\frac{1}{2}}, \ 
 |\xi_3+N + 2^{-1}N^{-\frac{1}{2}} | < N^{-\frac{1}{2}},
\end{equation*}
we have $|\Phi(\xi_1,\xi_2,\xi_3)| \lesssim 1$. Let $t$ be sufficiently small. 
By Plancherel's theorem, we get
\begin{align*}
&  \left\| \int_0^t{e^{-(t-t')(\partial_\xi^3 + \partial_\eta^3)}  (\partial_x + \partial_y) \left( 
(e^{-t'(\partial_\xi^3 + \partial_\eta^3)}\varphi_N) \, (e^{-t'(\partial_\xi^3 + \partial_\eta^3)} \varphi_N) \, 
(e^{-t'(\partial_\xi^3 + \partial_\eta^3)} \varphi_N)\right) } 
d t'\right\|_{H^{s}}\\
& \gtrsim N^{-2 s + 7/4} \biggl\| \chi_{\supp \psi_{N,\xi}}(\xi)
\int_0^t e^{- i t' (\Phi (\xi_1, \xi_2-\xi_1, \xi-\xi_2) +\Phi (\eta_1, \eta_2-\eta_1, \eta-\eta_2)) } \\
& \quad \ \iint \psi_{N,\xi}(\xi_1) \psi_{N,\xi}(\xi_2-\xi_1) \psi_{N,\xi}(\xi-\xi_2) d \xi_1 d\xi_2 
\iint \psi_{\eta}(\eta_1) \psi_{\eta}(\eta_2-\eta_1) \psi_{\eta}(\eta-\eta_2) d \eta_1 d\eta_2 
d t'\biggr\|_{L^2_{\xi\eta}}\\
& \gtrsim N^{-2 s +1/2}.
\end{align*}
This completes the proof of \eqref{goal-thm1.3}.
\end{proof}
Next we comment on Theorem \ref{mth2}. 
For the $3$D (mZK), in \cite{Gru14}, Gr\"{u}nrock established the local well-posedness in the full subcritical regime $s>1/2$. 
Theorem \ref{mth2} is an extension of the result by Gr\"{u}nrock. 
To be specific, Theorem \ref{mth2} establishes the small data global well-posedness in the scaling critical regularity Sobolev space for $d \geq 3$. 
The key ingredient in the proof of Theorem \ref{mth2} is that we employ $U^p$, $V^p$ spaces which were introduced by Koch and Tataru in \cite{KT05} and \cite{KT07}. See also \cite{HHK09} and \cite{HHK09-2}.

The paper is organized as follows. 
In Section 2, we introduce notations, $X^{s,b}$ space and estimates for the proof of Theorem \ref{mth}. 
Section 3 is devoted to the proof of the key estimate which establishes Theorem \ref{mth} immediately. 
In Sections 4 and 5, we consider Theorem \ref{mth2}. 
In the former section, we introduce $U^p$ and $V^p$ spaces and fundamental estimates. 
Lastly, in Section 4,  we will prove the key estimate which immediately provedes Theorem \ref{mth2}.

\vspace{5mm}
Throughout the paper, we use the following notations. 
$A{\ \lesssim \ } B$ means that there exists $C>0$ such that $A \le CB.$ 
Also, $A\sim B$ means $A{\ \lesssim \ } B$ and $B{\ \lesssim \ } A.$ 
Let $N$, $L \geq 1$ be dyadic numbers, i.e. there exist $n_1$, $n_2 \in \N_{0}$ such that 
$N= 2^{n_1}$ and $L=2^{n_2}$, and $\psi \in C^{\infty}_{0}((-2,2))$ be an even, non-negative function which satisfies $\psi (t)=1$ for $|t|\leq 1$ and letting 
$\psi_N (t):=\psi (t N^{-1})-\psi (2t N^{-1})$, 
$\psi_1(t):=\psi (t)$, 
the equality $\displaystyle{\sum_{N}\psi_{N}(t)=1}$ holds. 
Here we used $\displaystyle{\sum_{N}= \sum_{N \in 2^{\N_0}}}$ for simplicity. 
We also use the notations $\displaystyle{\sum_{L}= \sum_{L \in 2^{\N_0}}}$ and 
 $\displaystyle{\sum_{N,L}= \sum_{N,L \in 2^{\N_0}}}$ throughout the paper. 
\section*{Acknowledgement}
The author would like to thank Sebastian Herr for leading to the problem and giving him valuable suggestions. 
This work was supported by the DFG through the CRC 1283 ``Taming uncertainty and profiting from randomness and low regularity in analysis, stochastics and their applications.'' 
\section{Preliminaries for Theorem \ref{mth}}
In this section, we introduce notations and estimates which will be utilized to establish the key bilinear estimate for Theorem \ref{mth}. 
Let $u=u(t,x,y)$ with $(t,x,y) \in \R \times \R^2$. 
$\F_t u,\ \F_{x,y} u$ denote the Fourier transform of $u$ in time, space, respectively. 
$\F_{t, x,y} u = \ha{u}$ denotes the Fourier transform of $u$ in space and time. 
We define frequency and modulation projections $P_N$, $Q_L$ as
\begin{align*}
(\F_{x,y}^{-1}P_{N}u )(\xi, \eta ):= & \psi_{N}(|(\xi, \eta)| )(\F_{x,y}{u})(\xi,\eta ),\\
\widehat{Q_{L} u}(\tau ,\xi, \eta ):= & \psi_{L}(\tau - \xi^3 - \eta^3)\widehat{u}(\tau ,\xi, \eta ).
\end{align*}
Let $s, b \in \R$. We define $X^{s,b}(\R^3)$ spaces. 
\begin{align*}
&  X^{s,\,b} (\R^3) :=\{ f \in \mathcal{S}'(\R^3) \ | \ \|f \|_{X^{s,\,b}} < \infty \},\\
& \|f  \|_{X^{s,\, b}} := \left( \sum_{N,\, L} N^{2s} L^{2b} \|  P_N Q_L f \|^2_{L_{x, t}^{2}} \right)^{1/2}.
\end{align*}
For convenience, we define the set in frequency as
\begin{equation*}
G_{N, L} := \{ (\tau, \xi, \eta) \in \R^3 \, | \, \psi_{L}(\tau - \xi^3 - \eta^3) \psi_{N}(|(\xi, \eta)| ) \not= 0 .\}
\end{equation*}
Next we observe the fundamental properties of $X^{s,\,b}$. 
A simple calculation gives the following.
\begin{equation*}
(i)  \ \ 
\overline{{X}^{s,\,b}} = X^{s,\,b} , \qquad 
(ii)  \ \ (X^{s,\,b})^* =X^{-s,\,-b},
\end{equation*}
for $s$, $b \in \R$.

Recall the Strichartz estimates for the unitary group $\{ e^{-t (\partial_x^3+ \partial_y^3)}\}$.
%
%
%
%
\begin{lem}[Theorem 3.1. \cite{KPV91}]\label{thm2.1}
Let $\varphi \in L^2(\R^2)$. Then we have
\begin{align}
\| |\nabla_x|^{\frac{1}{2p}} |\nabla_y|^{\frac{1}{2p}} e^{-t (\partial_x^3+ \partial_y^3)} \varphi 
\|_{L_t^p L_{x,y}^q} & \lesssim 
\|\varphi\|_{L^2_{x,y}}, \quad 
\textit{if} \ \ \frac{2}{p} + \frac{2}{q} = 1, \ p >2,\label{Strichartz-01}\\
\|  e^{-t (\partial_x^3+ \partial_y^3)} \varphi  \|_{L_t^p L_{x,y}^q} & \lesssim \|\varphi\|_{L^2_{x,y}}, \quad 
\textit{if} \ \ \frac{3}{p} + \frac{2}{q} = 1, \ p >3,\label{Strichartz-02}
\end{align}
where $|\nabla_x|^s := \F^{-1}_x |\xi|^s \F_x$ and $|\nabla_y|^s :=  \F^{-1}_y |\eta|^s \F_y$ denote the Riesz potential operators with respect to $x$ and $y$, respectively.
\end{lem}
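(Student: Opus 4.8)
The estimates \eqref{Strichartz-01} and \eqref{Strichartz-02} are the Strichartz and Kato--Kenig--Ponce--Vega smoothing estimates for the two-dimensional Airy-type group and coincide with \cite[Theorem~3.1]{KPV91}; I sketch how one proves them from scratch, emphasising where the anisotropic weights $|\nabla_x|^{1/(2p)}|\nabla_y|^{1/(2p)}$ enter. The one structural fact driving everything is that the propagator factors, $e^{-t(\partial_x^3+\partial_y^3)}=e^{-t\partial_x^3}\circ e^{-t\partial_y^3}$, with convolution kernel $A_t(x)A_t(y)$ where $A_t(x)=\frac1{2\pi}\int e^{i(x\xi+t\xi^3)}\,d\xi$ is the Airy kernel, and that the ambient oscillatory phase is $x\xi+y\eta+t(\xi^3+\eta^3)$.

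For \eqref{Strichartz-02} I would begin with the dispersive estimate. Van der Corput's lemma applied to the cubic phase $x\xi+t\xi^3$ gives $\|A_t\|_{L^\infty_x}\lesssim|t|^{-1/3}$, hence $\|e^{-t(\partial_x^3+\partial_y^3)}\varphi\|_{L^\infty_{x,y}}\lesssim|t|^{-2/3}\|\varphi\|_{L^1_{x,y}}$, and interpolation with the $L^2$-isometry yields $\|e^{-t(\partial_x^3+\partial_y^3)}\|_{L^{q'}_{x,y}\to L^q_{x,y}}\lesssim|t|^{-\frac23(1-\frac2q)}$. Running the standard $TT^*$ argument with this decay --- the $TT^*$ kernel being $e^{-(t-t')(\partial_x^3+\partial_y^3)}$, to which one applies the Hardy--Littlewood--Sobolev inequality in $t$ --- produces $\|e^{-t(\partial_x^3+\partial_y^3)}\varphi\|_{L^p_tL^q_{x,y}}\lesssim\|\varphi\|_{L^2_{x,y}}$ precisely along the scaling line $\frac3p+\frac2q=1$ with $q<\infty$, i.e.\ $p>3$; the restriction $p>3$ is exactly what avoids the forbidden endpoint $(p,q)=(3,\infty)$.

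For \eqref{Strichartz-01} the issue is the derivative gain together with the larger range $p>2$, and this is where the degeneracy of the phase is felt. The Hessian of $x\xi+y\eta+t(\xi^3+\eta^3)$ in the frequency variables is $6t\,\mathrm{diag}(\xi,\eta)$, which is nondegenerate only off the two coordinate axes $\{\xi=0\}\cup\{\eta=0\}$; the multiplier $|\xi|^{1/(2p)}|\eta|^{1/(2p)}$ is precisely what compensates both degeneracies, its exponent $1/(2p)$ being the one forced by scaling since $-\frac1p+\frac2q+\frac3p=\frac2p+\frac2q=1$. With this weight inserted, the bound along the whole line $\frac2p+\frac2q=1$, $p>2$, is the global-smoothing oscillatory-integral estimate of \cite{KPV91}; at the endpoint $q=2$ (so $p=\infty$, weight $0$) it degenerates to the conservation law $\|e^{-t(\partial_x^3+\partial_y^3)}\varphi\|_{L^\infty_tL^2_{x,y}}=\|\varphi\|_{L^2_{x,y}}$, while the opposite endpoint $(p,q)=(2,\infty)$ is again excluded.

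The main obstacle --- and the reason one cannot obtain \eqref{Strichartz-01} by a soft tensor-product argument --- is the anisotropy: although the propagator factors into two one-dimensional evolutions, these share the \emph{same} time variable, so the mixed norm $L^p_tL^q_{x,y}$ does not split and one genuinely needs a two-dimensional stationary-phase analysis tracking $\xi$ and $\eta$ separately, dyadically decomposing away from each axis, estimating each degenerate oscillatory piece and summing. Controlling this degenerate anisotropic oscillatory integral --- equivalently, pinning down the sharp behaviour as $p\downarrow2$ (equivalently $q\uparrow\infty$) --- is the technical heart of \cite{KPV91}; the scaling bookkeeping, the $TT^*$ reduction and the interpolation are routine by comparison.
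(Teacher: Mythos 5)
The paper offers no proof of this lemma --- it is quoted directly as Theorem 3.1 of \cite{KPV91} --- and your proposal correctly identifies that source and gives an accurate account of how the estimates are obtained there: the dispersive bound, $TT^*$ reduction and Hardy--Littlewood--Sobolev for \eqref{Strichartz-02} (with $p>3$ indeed excluding the endpoint $q=\infty$), and the scaling-forced anisotropic weights $|\nabla_x|^{1/(2p)}|\nabla_y|^{1/(2p)}$ compensating the degeneracy of the Hessian $6t\,\mathrm{diag}(\xi,\eta)$ for \eqref{Strichartz-01}. Your treatment is therefore essentially the same as the paper's, namely a deferral to \cite{KPV91} for the genuinely hard oscillatory-integral estimate, supplemented by a correct sketch of the standard parts.
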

The Strichartz estimates above provide the following estimates. See \cite{GTV}. 
\begin{align}
\| |\nabla_x|^{\frac{1}{2p}} |\nabla_y|^{\frac{1}{2p}} Q_L u \|_{L_t^p L_{x,y}^q} & \lesssim L^{\frac{1}{2}} 
\| Q_L u\|_{L^2_{x,y,t}}, \quad 
\textnormal{if} \ \ \frac{2}{p} + \frac{2}{q} = 1, \ p >2,\label{Strichartz-1}\\
\|  Q_L u \|_{L_t^p L_{x,y}^q} & \lesssim L^{\frac{1}{2}} \| Q_L u\|_{L^2_{x,y,t}}, \quad 
\textnormal{if} \ \ \frac{3}{p} + \frac{2}{q} = 1, \ p >3.\label{Strichartz-2}
\end{align}
\begin{rem}
Since the estimates \eqref{Strichartz-1} and \eqref{Strichartz-2} are almost equivalent to \eqref{Strichartz-01} and \eqref{Strichartz-02}, respectively, 
we frequently call \eqref{Strichartz-1} and \eqref{Strichartz-2} Strichartz estimates in the paper.
\end{rem}
Next we introduce the bilinear transversal inequality. 
For $f :\R^3 \to \C$, we use the following notation hereafter.
\begin{equation*}
\supp_{\xi,\eta} f := \{(\xi,\eta) \in \R^2 \, | \, \textnormal{There exists $\tau \in \R$ such that } 
(\tau,\xi,\eta) \in \supp f. \}
\end{equation*}
\begin{prop}\label{bilinear-transversal}
Let $N_2 \leq N_1$, $\varphi (\xi,\eta) = \xi^3 + \eta^3$. 
Suppose that
\begin{equation*}
\supp \widehat{u}_{N_1,L_1} \subset G_{N_1,L_1}, \quad 
\supp \widehat{v}_{N_2,L_2} \subset G_{N_2,L_2},
\end{equation*}
and
\begin{equation*}
|\nabla \varphi(\xi_1,\eta_1)-\nabla \varphi(\xi_2,\eta_2)| \gtrsim N_1^2,
\end{equation*} 
for all $(\xi_1, \eta_1) \in \supp_{\xi,\eta} \widehat{u}_{N_1,L_1}$, 
$(\xi_2,\eta_2) \in \supp_{\xi,\eta} \widehat{v}_{N_2,L_2}$. 
Then we have
\begin{equation}
\| u_{N_1,L_1} \, v_{N_2, L_2} \|_{L_t^2 L_{xy}^2} \lesssim N_1^{-1} N_2^{\frac{1}{2}} 
(L_1 L_2)^{\frac{1}{2}} \| u_{N_1, L_1} \|_{L_t^2 L_{xy}^2} 
\|v_{N_2, L_2}  \|_{L_t^2 L_{xy}^2}.\label{strichartz-goal01}
\end{equation}
In particular, if $N_2 \leq 2^{-3}N_1$ and 
\begin{equation*}
\supp \widehat{u}_{N_1,L_1} \subset G_{N_1,L_1}, \quad 
\supp \widehat{v}_{N_2,L_2} \subset G_{N_2,L_2},
\end{equation*}
we have
\begin{equation}
\| u_{N_1,L_1} \, v_{N_2, L_2} \|_{L_t^2 L_{xy}^2} \lesssim N_1^{-1} N_2^{\frac{1}{2}}
(L_1 L_2)^{\frac{1}{2}} \| u_{N_1, L_1} \|_{L_t^2 L_{xy}^2} \|v_{N_2, L_2}  \|_{L_t^2 L_{xy}^2}.\label{strichartz-goal02}
\end{equation}
\end{prop}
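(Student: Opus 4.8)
The plan is to reduce the bilinear $L^2_{t,x,y}$ estimate to a transversality computation on the characteristic hypersurfaces by passing to the Fourier side and using a change of variables. First I would write the product $u_{N_1,L_1}v_{N_2,L_2}$ via its spacetime Fourier transform as a convolution, so that by Plancherel the left-hand side of \eqref{strichartz-goal01} becomes a weighted $L^2$-norm of
\[
\int \widehat{u}_{N_1,L_1}(\tau_1,\xi_1,\eta_1)\,\widehat{v}_{N_2,L_2}(\tau-\tau_1,\xi-\xi_1,\eta-\eta_1)\,d\tau_1 d\xi_1 d\eta_1 .
\]
By Cauchy--Schwarz in the convolution variables it suffices to bound the measure of the set over which the two supports overlap for fixed output frequency $(\tau,\xi,\eta)$, i.e.\ the quantity
\[
\sup_{(\tau,\xi,\eta)}\Big|\big\{(\tau_1,\xi_1,\eta_1)\in G_{N_1,L_1}: (\tau-\tau_1,\xi-\xi_1,\eta-\eta_1)\in G_{N_2,L_2}\big\}\Big|.
\]
This is the standard $L^2$ bilinear reduction (as in \cite{HHK09}), and it converts the problem into an estimate for the volume of an intersection of two "slabs" of thickness $L_1$ and $L_2$ around the paraboloid-type surfaces $\tau_i=\varphi(\xi_i,\eta_i)$.

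Next I would carry out the volume estimate. The $\tau_1$-integration is immediate: once the spatial frequencies are fixed, $\tau_1$ ranges over an interval of length $\lesssim \min(L_1,L_2)\le (L_1L_2)^{1/2}$, forced by the two modulation constraints $|\tau_1-\varphi(\xi_1,\eta_1)|\lesssim L_1$ and $|(\tau-\tau_1)-\varphi(\xi-\xi_1,\eta-\eta_1)|\lesssim L_2$. It remains to bound the measure of the set of $(\xi_1,\eta_1)$ with $(\xi_1,\eta_1)\in\supp_{\xi,\eta}\widehat{u}_{N_1,L_1}$, $(\xi-\xi_1,\eta-\eta_1)\in\supp_{\xi,\eta}\widehat{v}_{N_2,L_2}$, and $|\varphi(\xi_1,\eta_1)+\varphi(\xi-\xi_1,\eta-\eta_1)-\tau|\lesssim L_1+L_2$. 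The gradient of this phase in $(\xi_1,\eta_1)$ is exactly $\nabla\varphi(\xi_1,\eta_1)-\nabla\varphi(\xi-\xi_1,\eta-\eta_1)=\nabla\varphi(\xi_1,\eta_1)-\nabla\varphi(\xi_2,\eta_2)$, whose modulus is $\gtrsim N_1^2$ by hypothesis. Choosing coordinates so that one coordinate direction is aligned with this gradient, the phase constraint localizes that variable to an interval of length $\lesssim (L_1+L_2)N_1^{-2}\lesssim L_1 L_2 N_1^{-2}$ (using $L_i\ge 1$), while the other variable is confined by the support of $\widehat{v}_{N_2,L_2}$ to a set of length $\lesssim N_2$. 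Multiplying the three factors and taking square roots gives $(L_1L_2)^{1/2}\cdot(L_1L_2 N_1^{-2})^{1/2}\cdot N_2^{1/2}=N_1^{-1}N_2^{1/2}(L_1L_2)^{1/2}$, which is \eqref{strichartz-goal01} after dividing by the $L^2$-norms.

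For the "in particular" statement \eqref{strichartz-goal02}, I would check that the transversality hypothesis is automatic once $N_2\le 2^{-3}N_1$. Indeed $\nabla\varphi(\xi,\eta)=(3\xi^2,3\eta^2)$, so $\nabla\varphi(\xi_1,\eta_1)-\nabla\varphi(\xi_2,\eta_2)=3(\xi_1^2-\xi_2^2,\eta_1^2-\eta_2^2)$; since $|(\xi_1,\eta_1)|\sim N_1$ and $|(\xi_2,\eta_2)|\sim N_2\le 2^{-3}N_1$, at least one of $|\xi_1|,|\eta_1|$ is $\gtrsim N_1$ while the corresponding $|\xi_2|$ or $|\eta_2|$ is $\le 2^{-3}N_1$, and the triangle inequality forces $|\xi_1^2-\xi_2^2|+|\eta_1^2-\eta_2^2|\gtrsim N_1^2$. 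Then \eqref{strichartz-goal01} applies verbatim.

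The main obstacle is the volume estimate for the spatial frequency set: one must be careful that the lower bound $\gtrsim N_1^2$ on the gradient difference is genuinely a lower bound on the *full* gradient of the phase as a function of $(\xi_1,\eta_1)$ (the chain rule produces $\nabla\varphi(\xi_1,\eta_1)-\nabla\varphi(\xi_2,\eta_2)$ with the correct sign because $\xi_2=\xi-\xi_1$), and that after rotating coordinates the remaining "free" direction is still controlled by the smaller support of size $N_2$ rather than $N_1$ — otherwise one loses a factor. Handling the case $N_1\sim N_2$ (where the rotation may not neatly separate the two supports) requires slicing $\supp_{\xi,\eta}\widehat{v}_{N_2,L_2}$ along the direction of the gradient difference and summing, but the bound is the same; this is where most of the routine care goes.
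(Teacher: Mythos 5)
Your proposal follows the same route as the paper: Plancherel, Cauchy--Schwarz, and a measure bound on the overlap set $E(\tau,\xi,\eta)$ built from the three factors $\min(L_1,L_2)$ (the $\tau_1$-fiber), $\max(L_1,L_2)N_1^{-2}$ (from the gradient lower bound), and $N_2$ (from the small support); the verification that transversality is automatic when $N_2\le 2^{-3}N_1$ also matches the paper's. Two points need tightening. First, the $L$-bookkeeping: relaxing $\min(L_1,L_2)$ to $(L_1L_2)^{1/2}$ and $L_1+L_2$ to $L_1L_2$ separately double-counts, and the displayed product $(L_1L_2)^{1/2}\cdot(L_1L_2N_1^{-2})^{1/2}\cdot N_2^{1/2}$ actually equals $L_1L_2\,N_1^{-1}N_2^{1/2}$, which loses a factor $(L_1L_2)^{1/2}$ that is not affordable in \eqref{goal03-mth}; the correct combination is $\min(L_1,L_2)\cdot\max(L_1,L_2)=L_1L_2$, giving $|E|\lesssim L_1L_2N_1^{-2}N_2$ and hence the stated constant after taking one square root of the whole product. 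Second, ``choosing coordinates aligned with the gradient difference'' presupposes that this direction is essentially constant over the two supports, which the hypothesis does not give; the paper secures it by first decomposing both supports into balls of radius $r\ll N_2$ (almost orthogonality) and invoking the Hessian bound $|\partial_i\partial_j\varphi|\lesssim N_1$, so that $\nabla\varphi(\xi_1,\eta_1)-\nabla\varphi(\xi_2,\eta_2)$ varies by $\ll N_1N_2\le N_1^2$ on each product of balls and a single fixed component, say $\partial_1\varphi$, dominates uniformly --- after which no rotation is needed: one integrates in $\xi_1$ for fixed $\eta_1$ and uses that $\eta_1$ is confined to an interval of length $\lesssim N_2$. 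Your closing remark flags the right danger, but the proposed fix (slicing $\supp_{\xi,\eta}\widehat{v}_{N_2,L_2}$) does not by itself freeze the direction; it is the localization of the $N_1$-piece, together with the second-derivative bound, that does.
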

\begin{proof}
First we consider \eqref{strichartz-goal01}. 
By Plancherel's theorem, it suffices to show
\begin{equation}
\begin{split}
& \left\| \int  \ha{u}_{N_1, L_1}(\tau_1, \xi_1,\eta_1) 
\ha{v}_{N_2, L_2} (\tau- \tau_1, \xi-\xi_1, \eta-\eta_1 ) d\tau_1 d \xi_1 d\eta_1 \right\|_{L^2}  \\ 
& \qquad \qquad \qquad \qquad \qquad 
\lesssim N_1^{-1} N_2^{\frac{1}{2}}
(L_1 L_2)^{\frac{1}{2}} \| \widehat{u}_{N_1, L_1} \|_{L^2} \| \widehat{v}_{N_2, L_2}  \|_{L^2}.
\end{split}\label{est03-bilinear-transversal}
\end{equation}
By the almost orthogonality, we may assume that 
$\supp_{\xi,\eta} \ha{u}_{N_1, L_1}$ and $\supp_{\xi,\eta} \ha{v}_{N_2, L_2}$ are confined to balls whose radius $r$ such that $r \ll N_2$, respectively. 
Since $\varphi$ is a cubic polynomial, we deduce from $N_2 \leq N_1$ that 
\begin{equation*}
\sup_{1 \leq i,j \leq 2}(|\partial_i \partial_j \varphi(\xi_1,\eta_1)| + 
|\partial_i \partial_j \varphi(\xi- \xi_1,\eta-\eta_1)|) \lesssim N_1.
\end{equation*} 
Therefore, we easily observe 
\begin{equation*}
|\nabla \varphi(\xi,\eta)-\nabla \varphi(\xi',\eta')| \ll N_1 N_2 \quad \textnormal{if } \ 
|(\xi,\eta) - (\xi', \eta')| \ll N_2.
\end{equation*}
This implies that, without loss of generality, we may assume that
\begin{equation}
|\partial_1 \varphi(\xi_1,\eta_1)-\partial_1 \varphi(\xi_2,\eta_2)| 
\gtrsim N_1^2,\label{est04-bilinear-transversal}
\end{equation} 
for all  $(\xi_1, \eta_1) \in \supp_{\xi,\eta} \widehat{u}_{N_1,L_1}$, 
$(\xi_2,\eta_2) \in \supp_{\xi,\eta} \widehat{v}_{N_2,L_2}$. 
Now we turn to \eqref{est03-bilinear-transversal}. By the Cauchy-Schwarz inequality, we get
\begin{align*}
\biggl\| \int  \ha{u}_{N_1, L_1}(\tau_1, \xi_1,\eta_1)  &
\ha{v}_{N_2, L_2}  (\tau- \tau_1, \xi-\xi_1,\eta-\eta_1 ) d\tau_1 d \xi_1 d \eta_1 \biggr\|_{L^2}  \\
\leq & \left\|   \left(\left| 
\ha{u}_{N_1, L_1}  \right|^2 * 
\left|\ha{v}_{N_2, L_2}  \right|^2
 \right)^{1/2} |E(\tau, \xi,\eta )|^{1/2} \right\|_{L^2} \\
\leq & \sup_{\tau, \xi,\eta} |E(\tau, \xi,\eta )|^{1/2} 
 \left\| \left| 
 \ha{u}_{N_1, L_1} \right|^2 * 
\left|\ha{v}_{N_2, L_2}  \right|^2
\right\|_{L^1}^{1/2}\\
\leq & \sup_{\tau, \xi,\eta} |E(\tau, \xi,\eta )|^{1/2} 
\|\ha{u}_{N_1, L_1}\|_{L^2} 
\| \ha{v}_{N_2, L_2}  \|_{L^2},
\end{align*}
where $E(\tau, \xi,\eta) \subset \R^{3}$ is defined by
\begin{equation*}
E(\tau, \xi,\eta) := \{ (\tau_1, \xi_1,\eta_1) \in \supp \widehat{u}_{N_1,L_1}
\, | \, (\tau-\tau_1, \xi- \xi_1,\eta-\eta_1) \in \widehat{v}_{N_2,L_2} \}.
\end{equation*}
Thus, it suffices to show
\begin{equation}
|E(\tau, \xi,\eta)| \lesssim N_1^{-2} N_2 L_1 L_2.\label{est05-bilinear-transversal}
\end{equation}
If we fix $(\xi_1,\eta_1)$, it is easily observed that
\begin{equation}
 | \{ \tau_1 \, | \, (\tau_1, \xi_1,\eta_1) \in E(\tau, \xi,\eta) \}| 
\lesssim \min(L_1, L_2).\label{est06-bilinear-transversal}
\end{equation}
Next, if we fix 
$\eta_1$, since $\max (L_1, L_2)  \gtrsim  |\varphi(\xi_1,\eta_1)+\varphi(\xi-\xi_1,\eta-\eta_1)-\tau|$, 
the inequality \eqref{est04-bilinear-transversal} implies that $\xi_1$ is confined to an interval whose length is comparable to $\max (L_1, L_2)/N_1^2$. 
This, combined with \eqref{est06-bilinear-transversal} and 
$(\tau_1, \xi_1,\eta_1) \in \supp \widehat{u}_{N_1,L_1}$ which implies $\mathcal{O}(\eta_1) \leq N_2$, 
yields \eqref{est05-bilinear-transversal}.

To see \eqref{strichartz-goal02}, it suffices to show
\begin{equation*}
|(\xi_1,\eta_1)| \geq 2 |(\xi_2,\eta_2)| \Longrightarrow 
|\nabla \varphi(\xi_1,\eta_1)-\nabla \varphi(\xi_2,\eta_2)| \gtrsim |(\xi_1,\eta_1)|^2,
\end{equation*}
which is verified by a simple calculation.
\end{proof}
\section{Proof of the Key estimate for Theorem \ref{mth}}
In this section, we establish the key estimate which gives Theorem \ref{mth} by a standard iteration argument, see \cite{GTV}, \cite{KPV96}, and \cite{Tao06}. 
In this paper, we omit the details of the proof of Theorem \ref{mth} and focus on showing the following key estimate.
\begin{thm}\label{nonlinearity-estimate}
For any $s \geq 1/4$, there exist $b \in (1/2, 1)$, $\e >0$ and $C>0$ such that
\begin{equation}
\|(\partial_x + \partial_y) (u_1 u_2 u_3) \|_{X^{s,\,b-1+\e}}   \leq C \prod_{i=1}^3 \|u_i \|_{X^{s,\,b}}.
\label{goal01-mth}
\end{equation}
\end{thm}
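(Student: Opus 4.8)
The plan is to perform a Littlewood–Paley decomposition of the three inputs $u_i=\sum_{N_i,L_i}P_{N_i}Q_{L_i}u_i$ and of the output frequency/modulation, reducing \eqref{goal01-mth} to a dyadic estimate for
\[
N_0^{s}L_0^{b-1+\e}N_0\,\big\| P_{N_0}Q_{L_0}\big(\textstyle\prod_{i=1}^3 P_{N_i}Q_{L_i}u_i\big)\big\|_{L^2_{xyt}}
\lesssim N_0^{-\delta}\prod_{i=1}^3 N_i^{s}L_i^{b}\|P_{N_i}Q_{L_i}u_i\|_{L^2_{xyt}}
\]
(with the extra derivative from $(\partial_x+\partial_y)$ producing the factor $N_0$), summable in all seven dyadic parameters. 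By the usual conventions I order $N_1\geq N_2\geq N_3$; then $N_0\lesssim N_1$, and I split into the high-high-high regime $N_0\sim N_1\sim N_2$ and the resonant-type regime $N_0\ll N_1\sim N_2$, with $N_3$ either comparable to $N_1$ or much smaller. The workhorse is the classical modulation bound: on the support of the product, $L_{\max}:=\max(L_0,L_1,L_2,L_3)\gtrsim |\sigma|$, where $\sigma$ is the resonance function obtained from $\tau_0-\xi_0^3-\eta_0^3=\sum_i(\tau_i-\xi_i^3-\eta_i^3)+\big(\Phi(\xi_1,\xi_2,\xi_3)+\Phi(\eta_1,\eta_2,\eta_3)\big)$ with $\Phi$ as in the proof of Theorem \ref{not-c3}. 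The key algebraic fact is a lower bound for $|\Phi(\xi_1,\xi_2,\xi_3)+\Phi(\eta_1,\eta_2,\eta_3)|$: since $\Phi(\xi_1,\xi_2,\xi_3)=3(\xi_1+\xi_2)(\xi_2+\xi_3)(\xi_3+\xi_1)$, one gains $N_1^2 N_2$ unless two of the frequencies are nearly antipodal, i.e. unless some $\xi_i+\xi_j$ and some $\eta_k+\eta_l$ are both small.

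The estimates themselves come from two tools already in the excerpt. When two of the inputs have transversal spatial frequencies — which, via $\nabla\varphi=(3\xi^2,3\eta^2)$, happens unless the two frequency vectors are comparable in size and nearly antipodal — Proposition \ref{bilinear-transversal} gives the bilinear $L^2$ bound with the gain $N_1^{-1}N_2^{1/2}(L_aL_b)^{1/2}$; I pair the best two inputs this way and place the third in $L^\infty_{xyt}$ or an $L^p_tL^q_{xy}$ Strichartz norm via \eqref{Strichartz-1}–\eqref{Strichartz-2} and Bernstein. When no such transversality is available (the nearly-antipodal case), the resonance function is small, so no modulation gain is free; there I instead use Hölder with two Strichartz factors $L^4_tL^4_{xy}$ (from $2/p+2/q=1$, after spending $N^{1/8}$ of derivative via Bernstein in $x$ and $y$) together with an $L^2$ factor, and combine with the elementary $\|P_{N_0}(fg)\|_{L^2}\lesssim \min(\cdots)$ volume bounds; careful bookkeeping shows the $s=1/4$ threshold is exactly what makes the worst such term sum. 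Choosing $b$ slightly above $1/2$ and $\e$ small enough that $b-1+\e<0$ lets me absorb one power of $L_{\max}$ into $L_0^{b-1+\e}$ or into the $L_i^{b}$'s as needed, and a genuine negative power $N_0^{-\delta}$ or $(N_1/N_0)^{-\delta}$ survives off-diagonal, giving summability by Cauchy–Schwarz.

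I expect the main obstacle to be the high-high-to-high interaction with all three spatial frequencies of size $N$ and nearly antipodal pairs, which is exactly the configuration singled out in the proof of Theorem \ref{not-c3}: here both the transversality mechanism and the modulation gain degenerate simultaneously in the $\xi$ and the $\eta$ variables, and one must extract whatever smallness is available from the fact that the \emph{sum} $\Phi(\xi)+\Phi(\eta)$ can only be small when the antipodal defects are correlated, not independently. Handling this will require a more refined decomposition of the frequency supports into almost-antipodal tubes and a transversal estimate applied \emph{within} such a tube (so that the relevant gain is a power of the tube width rather than of $N$), balanced against the number of tubes; this is the place where the precise value $s=1/4$ enters and where I expect to spend most of the effort. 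All other regions — genuinely transversal, or with a large frequency gap $N_0\ll N_1$, or with $N_3\ll N_2$ — are comparatively soft and follow from Proposition \ref{bilinear-transversal} plus \eqref{Strichartz-1}, \eqref{Strichartz-2}, and Bernstein, with room to spare.
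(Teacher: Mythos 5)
Your overall architecture is the same as the paper's: reduce by duality and dyadic decomposition to a quadrilinear $L^2$ estimate of the form \eqref{goal03-mth}, dispatch the off-diagonal regimes $N_0\ll N_1$ and $N_3\ll N_2$ with the bilinear transversal bound of Proposition \ref{bilinear-transversal} plus Strichartz, and attack the all-comparable case by a finer decomposition of the frequency supports with a transversal gain measured in the width of the localization. However, there are two problems. First, your ``key algebraic fact'' is false: since $\Phi(\xi_1,\xi_2,\xi_3)=3(\xi_1+\xi_2)(\xi_2+\xi_3)(\xi_3+\xi_1)$, the two summands $\Phi(\xi)$ and $\Phi(\eta)$ can each be of size $N_1^3$ with opposite signs (e.g.\ $\xi_1=\xi_2=\xi_3=N$, $\eta_1=\eta_2=\eta_3=-N$), so $|\Phi(\xi)+\Phi(\eta)|$ can vanish with no pair $\xi_i+\xi_j$ or $\eta_k+\eta_l$ small. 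A case analysis organized around ``resonance large unless nearly antipodal'' therefore misclassifies configurations; the paper avoids the resonance function entirely after the trivial reduction to $L_{\max}\ll N_1^3$ and works only with the gradient separation $|\nabla\varphi(\zeta)-\nabla\varphi(\zeta')|$, which is the quantity that actually controls the bilinear $L^2$ bound.

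Second, and more seriously, the sub-case you correctly identify as the crux is not resolved by your proposed mechanism alone. Your tube decomposition with gain a power of the tube width is essentially Proposition \ref{bilinear-transversal2} of the paper (localization $|\eta|\sim A^{-1}N_1$, gain $A^{1/4}N_1^{-1/2}$ per pair, summed over dyadic $A$), and that does handle the regime where some $|\eta_i|$ is not too small. But the genuinely critical configuration is the one where all four frequencies concentrate within distance $O(L_{\max}^{C\e})$ of the $\xi$-axis, i.e.\ the problem degenerates to one-dimensional mKdV. There no transversality in $\eta$ is available at any tube scale, the transversal gain in $\xi$ saturates at $N_1^{-1/4}$ per pair, and --- this is the point your sketch misses --- one can no longer retain the full modulation factor $(L_1L_2)^{1/2}$: the paper's Proposition \ref{bilinear-transversal3} only yields $L_{\max}^{-2\e}(L_1L_2)^{1/2}+L_{\max}^{10\e}\min(L_1,L_2)^{1/2}$, and the frequency-localization threshold must be \emph{coupled to the modulation size} $L_{\max}$ for the two contributions to balance. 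This is exactly what forces the exponent $b-1+\e$ (rather than $b-1$) in \eqref{goal01-mth} and where $s=1/4$ becomes sharp; ``absorbing one power of $L_{\max}$ by taking $b$ slightly above $1/2$'' does not by itself recover the missing half power of $\min(L_1,L_2)$. Until you supply an analogue of Proposition \ref{bilinear-transversal3} and show how its $\min(L_1,L_2)^{1/2}$ loss is compensated, the proof is incomplete at precisely the endpoint configuration the theorem is about.
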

By a duality argument and dyadic decompositions, we observe that 
\begin{align}
(\ref{goal01-mth}) 
\iff \ & \left| \int{ u_4 (\partial_x + \partial_y) (u_1 u_2 u_3)} dtdxdy \right| 
\lesssim \prod_{i=1}^3 \|u_i \|_{X^{s,\,b}} \| u_4\|_{X^{-s, 1-b-\e}}.\notag \\
\Longleftarrow \ \, & \sum_{{{\substack{N_i, L_i\\ \tiny{(i=1,2,3,4)}}}}} N_4 
 \left|\int{ \prod_{i=1}^4 (Q_{L_i} P_{N_i}u_i) 
} 
dt dx dy \right|\lesssim \prod_{i=1}^3 
\|u_i \|_{X^{s,\,b}} \| u_4\|_{X^{-s, 1-b-\e}}.\label{goal02-mth}
\end{align}
For simplicity, we use the following notations.
\begin{align*}
& N_{\min} = \min (N_1, N_2, N_3,N_4), \quad  
N_{\max} = \max (N_1, N_2, N_3,N_4),\\
& L_{\max} = \max (L_1, L_2, L_3,L_4), \quad \ u_{N_i, L_i}= Q_{L_i} P_{N_i}u_i. 
\end{align*}
Clearly, \eqref{goal02-mth} is verified by showing
\begin{equation}
\left| \int{ u_{N_1, L_1} u_{N_2, L_2} u_{N_3, L_3} u_{N_4, L_4}} dtdxdy \right| 
\lesssim N_{\min}^{\frac{3}{4}} N_{\max}^{-\frac{5}{4}} (L_1 L_2 L_3 L_4)^{\frac{1}{2}- \e}
\prod_{i=1}^{4} \|u_{N_i, L_i} \|_{L^2}.\label{goal03-mth}
\end{equation}
By symmetry, we assume $N_1 \gtrsim N_2 \gtrsim N_3 \gtrsim N_4$. 
We first note that if $N_1 \sim 1$ we easily obtain \eqref{goal03-mth} by using the Strichartz estimates. 
Further, by the Strichartz estimates, we can see that $L_{\max} \gtrsim N_1^3$ yields \eqref{goal03-mth}. 
For simplicity, here we only treat the case $L_4 = L_{\max}$. 
The other cases can be treated in the same way. 
By the H\"{o}lder's inequality, the Strichartz estimates \eqref{Strichartz-2} with $(p,q)=(6,4)$ and the 
Sobolev inequality, we get
\begin{align*}
& \left| \int{ u_{N_1, L_1} u_{N_2, L_2} u_{N_3, L_3} u_{N_4, L_4}} dtdxdy \right| \\
& \lesssim \|u_{N_1,L_1}\|_{L_t^6 L_{xy}^4} \|u_{N_2,L_2}\|_{L_t^6 L_{xy}^4} \|u_{N_2,L_2}\|_{L_t^6 L_{xy}^4} 
\|u_{N_4,L_4}\|_{L_t^2 L_{xy}^4} \\
& \lesssim (L_1 L_2 L_3)^{\frac{1}{2}}  \|u_{N_1,L_1}\|_{L^2} \|u_{N_2,L_2}\|_{L^2} \|u_{N_2,L_2}\|_{L^2}
N_4^{\frac{1}{2}}\|u_{N_4,L_4}\|_{L^2} \\
& \lesssim N_4^{\frac{1}{2}}N_1^{-\frac{5}{4}} (L_1 L_2 L_3)^{\frac{1}{2}} L_4^{\frac{5}{12}}\prod_{i=1}^{4} \|u_{N_i, L_i} \|_{L^2}.
\end{align*}
This completes the proof of \eqref{goal03-mth}. 

Hereafter, we assume $1 \ll N_1$ and $L_{\max} \ll N_1^3$. 
We divide the proof into the following three cases.

Case 1: $N_1 \sim N_2 \gg N_3 \gtrsim N_4$,

Case 2: $N_1 \sim N_2 \sim N_3 \gg N_4$,

Case 3: $N_1 \sim N_2 \sim N_3 \sim N_4$.\\
\underline{Case 1: $N_1 \sim N_2 \gg N_3 \gtrsim N_4$.} 
Since $N_1 \gg N_3$ and $N_2 \gg N_4$, this case is easily handled by the bilinear transversal estimate 
\eqref{strichartz-goal02} as follows.
\begin{align*}
& \left| \int{ u_{N_1, L_1} u_{N_2, L_2} u_{N_3, L_3} u_{N_4, L_4}} dtdxdy \right| \\
& \lesssim \|u_{N_1,L_1} u_{N_3, L_3}\|_{L_t^2 L_{xy}^2} \|u_{N_2,L_2}u_{N_4,L_4}\|_{L_t^2 L_{xy}^2}\\
& \lesssim N_1^{-2} N_3^{\frac{1}{2}} N_4^{\frac{1}{2}} (L_1 L_2 L_3 L_4)^{\frac{1}{2}} \prod_{i=1}^{4} \|u_{N_i, L_i} \|_{L^2},
\end{align*}
which completes the proof of \eqref{goal03-mth}.\\
\underline{Case 2: $N_1 \sim N_2 \sim N_3 \gg N_4$.} 
By harmless decompositions, we may assume that 
$\supp_{\xi,\eta} \widehat{u}_{N_j,L_j}$ 
$(j=1,2,3)$ is contained in a ball such that its radius $r$ satisfies $r \ll N_1$. 
We divide the proof into two cases. 
First we assume 
\begin{equation*}
|\nabla \varphi(\xi_1, \eta_1) - \nabla \varphi (\xi_2,\eta_2)| \gtrsim N_1^2,
\end{equation*}
for all $(\xi_1, \eta_1) \in \supp_{\xi,\eta} \widehat{u}_{N_1,L_1}$, 
$(\xi_2,\eta_2) \in \supp_{\xi,\eta} \widehat{u}_{N_2,L_2}$. 
In this case, \eqref{strichartz-goal01} in Proposition \ref{bilinear-transversal} gives
\begin{equation}
\| u_{N_1,L_1} \, u_{N_2, L_2} \|_{L_t^2 L_{xy}^2} \lesssim N_1^{-\frac{1}{2}} 
(L_1 L_2)^{\frac{1}{2}} \| u_{N_1, L_1} \|_{L_t^2 L_{xy}^2} 
\|u_{N_2, L_2}  \|_{L_t^2 L_{xy}^2}.\label{est01-case2}
\end{equation}
On the other hand, since $N_1 \sim N_3 \gg N_4$, we get
\begin{equation}
\| u_{N_3,L_3} \, u_{N_4, L_4} \|_{L_t^2 L_{xy}^2} \lesssim N_1^{-1} N_4^{\frac{1}{2}}
(L_3 L_4)^{\frac{1}{2}} \| u_{N_3, L_3} \|_{L_t^2 L_{xy}^2} \|u_{N_4, L_4}  \|_{L_t^2 L_{xy}^2}.\label{est02-case2}
\end{equation}
Consequently, by the H\"{o}lder's inequality and \eqref{est01-case2}, \eqref{est02-case2}, we have
\begin{align*}
& \left| \int{ u_{N_1, L_1} u_{N_2, L_2} u_{N_3, L_3} u_{N_4, L_4}} dtdxdy \right| \\
& \lesssim \|u_{N_1,L_1} u_{N_2, L_2}\|_{L_t^2 L_{xy}^2} \|u_{N_3,L_3}u_{N_4,L_4}\|_{L_t^2 L_{xy}^2}\\
& \lesssim N_1^{-\frac{3}{2}} N_4^{\frac{1}{2}} (L_1 L_2 L_3 L_4)^{\frac{1}{2}} \prod_{i=1}^{4} \|u_{N_i, L_i} \|_{L^2},
\end{align*}
which completes the proof of \eqref{goal03-mth}. 
Next suppose that there exist $(\xi_1, \eta_1) \in \supp_{\xi,\eta} \widehat{u}_{N_1,L_1}$, 
$(\xi_2,\eta_2) \in \supp_{\xi,\eta} \widehat{u}_{N_2,L_2}$ such that 
\begin{equation}
|\nabla \varphi(\xi_1, \eta_1) - \nabla \varphi (\xi_2,\eta_2)| \ll N_1^2.\label{est03-case2}
\end{equation}
Since $|(\xi_1, \eta_1)| \geq N_1/2$, without loss of generality, we can assume $|\xi_1| \geq N_1/4$. 
This and \eqref{est03-case2} imply
\begin{align*}
|\partial_{1} & \varphi(\xi_1,\eta_1) - \partial_{1} \varphi(\xi_2,\eta_2)| \ll N_1^2 \\
& \iff 3|(\xi_1 - \xi_2)(\xi_1 + \xi_2)| \ll N_1^2\\
& \ \Longrightarrow  \ |\xi_2(2 \xi_1 + \xi_2)| \gtrsim N_1^2\\
&  \ \Longrightarrow \ |\partial_{1} \varphi(\xi_1,\eta_1) - \partial_{1} \varphi(\xi_1+\xi_2,\eta_1+\eta_2)|
\gtrsim N_1^2.
\end{align*}
Thus, because $N_1 \gg N_4$, we may assume
\begin{equation*}
|\nabla \varphi(\xi_1, \eta_1) - \nabla \varphi (\xi_3,\eta_3)| \gtrsim N_1^2,
\end{equation*}
for all $(\xi_1, \eta_1) \in \supp_{\xi,\eta} \widehat{u}_{N_1,L_1}$, 
$(\xi_3,\eta_3) \in \supp_{\xi,\eta} \widehat{u}_{N_3,L_3}$. 
Therefore, in the same manner as for the former case, Proposition \ref{bilinear-transversal} provides
\begin{align*}
& \left| \int{ u_{N_1, L_1} u_{N_2, L_2} u_{N_3, L_3} u_{N_4, L_4}} dtdxdy \right| \\
& \lesssim \|u_{N_1,L_1} u_{N_3, L_3}\|_{L_t^2 L_{xy}^2} \|u_{N_2,L_2}u_{N_4,L_4}\|_{L_t^2 L_{xy}^2}\\
& \lesssim N_1^{-\frac{3}{2}} N_4^{\frac{1}{2}} (L_1 L_2 L_3 L_4)^{\frac{1}{2}} \prod_{i=1}^{4} \|u_{N_i, L_i} \|_{L^2}.
\end{align*}
\underline{Case 3: $N_1 \sim N_2 \sim N_3 \sim N_4$.} 
Similarly to the previous case, by performing harmless decompositions, 
we assume that $\supp_{\xi,\eta} \widehat{u}_{N_i,L_i}$ 
$(i=1,2,3,4)$ is contained in a ball whose radius $r$ satisfies $r \ll N_1$. 
First we deal with the simple case $|\xi_i|\sim |\eta_i| \sim N_1$ $(i=1,2,3,4)$. 
By employing the Strichartz estimate \eqref{Strichartz-1} with $p=q=4$, we have
\begin{equation}
\|u_{N_i,L_i} \|_{L_t^4 L_{xy}^4} \lesssim  N_1^{-\frac{1}{4}}L_i^{\frac{1}{2}} \|u_{N_i,L_i}\|_{L^2},
\label{est01-case3-0}
\end{equation}
which immediately yields \eqref{goal03-mth} as follows.
\begin{align*}
 \left| \int{ u_{N_1, L_1} u_{N_2, L_2} u_{N_3, L_3} u_{N_4, L_4}} dtdxdy \right| 
& \lesssim \prod_{i=1}^4 \|u_{N_i,L_i}\|_{L^4}\\
& \lesssim N_1^{-1}  (L_1 L_2 L_3 L_4)^{\frac{1}{2}} \prod_{i=1}^{4} \|u_{N_i, L_i} \|_{L^2}.
\end{align*}
Thus, without loss of generality, we may assume that $|\eta_4| \ll N_1$. 
We divide the proof into three cases.

(1) $\min(|\xi_1|, |\xi_2|, |\xi_3|) \ll N_1$,

(2) $|\xi_1| \sim |\xi_2| \sim |\xi_3| \sim N_1$, $\ $ 
$L_{\max}^{8\e} \leq \max(|\eta_1|, |\eta_2|, |\eta_2|, |\eta_4|)$,

(3) $|\xi_1| \sim |\xi_2| \sim |\xi_3| \sim N_1$, $\ $ 
$\max(|\eta_1|, |\eta_2|, |\eta_2|, |\eta_4|) \leq L_{\max}^{8\e}.$\\

We consider the first case. Without loss of generality, we can assume $|\xi_3| \ll N_1$. 
Note that, since $N_1 \sim N_3 \sim N_4$, 
it holds that $|\eta_3| \sim |\xi_4| \sim N_1$, and then it is easily obtained 
$|\nabla \varphi(\xi_3, \eta_3) - \nabla \varphi (\xi_4,\eta_4)| \gtrsim N_1^2$ which, by utilizing Proposition \ref{bilinear-transversal}, yields
\begin{equation}
\| u_{N_3,L_3} \, u_{N_4, L_4} \|_{L_t^2 L_{xy}^2} \lesssim N_1^{-\frac{1}{2}}
(L_3 L_4)^{\frac{1}{2}} \| u_{N_3, L_3} \|_{L_t^2 L_{xy}^2} \|u_{N_4, L_4}  \|_{L_t^2 L_{xy}^2}.\label{est01-case3-1}
\end{equation}
We consider two cases. First assume that $|\xi_1|\sim |\eta_1| \sim |\xi_2|\sim |\eta_2|\sim N_1$. 
Recall that this condition provides the $L^4$ Strichartz estimate \eqref{est01-case3-0} for $i=1,2$. 
Then by \eqref{est01-case3-1}, we have
\begin{align*}
 \left| \int{ u_{N_1, L_1} u_{N_2, L_2} u_{N_3, L_3} u_{N_4, L_4}} dtdxdy \right| 
& \lesssim  \|u_{N_1,L_1}\|_{L^4} \|u_{N_2,L_2}\|_{L^4} \| u_{N_3,L_3} \, u_{N_4, L_4} \|_{L^2}\\
& \lesssim N_1^{-1}  (L_1 L_2 L_3 L_4)^{\frac{1}{2}} \prod_{i=1}^{4} \|u_{N_i, L_i} \|_{L^2}.
\end{align*}
Next we treat the case $\min(|\xi_1|,|\eta_1|,|\xi_2|,|\eta_2|) \ll N_1$. 
Without loss of generality, assume $|\xi_2| \ll N_1$. 
Clearly, this implies $|\xi_1| \sim N_1$ since $|\xi_3| \ll N_1$ and 
$|\xi_4| \sim N_1$. Therefore we get $|\partial_{1} \varphi(\xi_1,\eta_1) - \partial_{1} \varphi(\xi_2,\eta_2)| \gtrsim N_1^2$ which yields
\begin{equation*}
\| u_{N_1,L_1} \, u_{N_2, L_2} \|_{L_t^2 L_{xy}^2} \lesssim N_1^{-\frac{1}{2}} 
(L_1 L_2)^{\frac{1}{2}} \| u_{N_1, L_1} \|_{L_t^2 L_{xy}^2} 
\|u_{N_2, L_2}  \|_{L_t^2 L_{xy}^2}.
\end{equation*}
This and \eqref{est01-case3-1} verify the desired estimate. 

To deal with the second case, we introduce the following bilinear estimate.
\begin{prop}\label{bilinear-transversal2}
Let $A$ be dyadic such that $1 \leq A \leq N_1 L_{\max}^{-8\e}$. 
Suppose that
\begin{align*}
& \supp \widehat{u}_{N_1,L_1} \subset G_{N_1,L_1} \cap 
\{(\tau,\xi,\eta) \, | \, |\xi|\sim N_1, \ |\eta| \sim  A^{-1}N_1\},\\ 
& \supp \widehat{v}_{N_2,L_2} \subset G_{N_2,L_2} \cap 
\{(\tau,\xi,\eta) \, | \, |\xi|\sim N_1, \ |\eta|\ll  A^{-1} N_1\},
\end{align*}
Then we have
\begin{equation}
\| u_{N_1,L_1} \, v_{N_2, L_2} \|_{L_t^2 L_{xy}^2} \lesssim A^{\frac{1}{4}} N_1^{-\frac{1}{2}}
(L_1 L_2)^{\frac{1}{2}} \| u_{N_1, L_1} \|_{L_t^2 L_{xy}^2} 
\|v_{N_2, L_2}  \|_{L_t^2 L_{xy}^2}.\label{bilinear-trans01-case3-2}
\end{equation}
\end{prop}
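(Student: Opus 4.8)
The plan is to reduce \eqref{bilinear-trans01-case3-2} to a volume-counting estimate of the same flavour as in the proof of Proposition \ref{bilinear-transversal}, exploiting transversality in the \emph{second} frequency variable $\eta$ rather than the first. By Plancherel's theorem and Cauchy--Schwarz applied exactly as in the proof of \eqref{est03-bilinear-transversal}, it suffices to bound the size of the resonant set
\begin{equation*}
E(\tau,\xi,\eta) := \{(\tau_1,\xi_1,\eta_1) \in \supp \widehat{u}_{N_1,L_1} \, | \, (\tau-\tau_1,\xi-\xi_1,\eta-\eta_1) \in \supp \widehat{v}_{N_2,L_2}\}
\end{equation*}
by $|E(\tau,\xi,\eta)| \lesssim A^{1/2} N_1^{-1} L_1 L_2$, and then take the square root. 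As before, fixing $(\xi_1,\eta_1)$ confines $\tau_1$ to an interval of length $\lesssim \min(L_1,L_2)$.

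The key point is the transversality input. On the support of $\widehat{u}_{N_1,L_1}$ we have $|\eta_1| \sim A^{-1}N_1$, while on the support of $\widehat{v}_{N_2,L_2}$ we have $|\eta-\eta_1| \ll A^{-1}N_1$; since $\partial_2 \varphi(\xi,\eta) = 3\eta^2$, this gives
\begin{equation*}
|\partial_2 \varphi(\xi_1,\eta_1) - \partial_2 \varphi(\xi-\xi_1,\eta-\eta_1)| = 3|\eta_1 - (\eta-\eta_1)||\eta_1 + (\eta - \eta_1)| \sim (A^{-1}N_1)^2,
\end{equation*}
so the two gradients are separated by $\sim A^{-2}N_1^2$ in the $\eta$-direction. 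After a harmless decomposition into balls of radius $r \ll A^{-1}N_1$ (so that this lower bound is stable), we fix $\xi_1$ and use $\max(L_1,L_2) \gtrsim |\varphi(\xi_1,\eta_1) + \varphi(\xi-\xi_1,\eta-\eta_1) - \tau|$ together with the transversality bound to confine $\eta_1$ to an interval of length $\lesssim \max(L_1,L_2) A^2 N_1^{-2}$. Meanwhile $\xi_1$ ranges over an interval of length $\lesssim N_1$ (it lies in $\{|\xi_1| \sim N_1\}$). Multiplying the three contributions gives
\begin{equation*}
|E(\tau,\xi,\eta)| \lesssim \min(L_1,L_2) \cdot \max(L_1,L_2) A^2 N_1^{-2} \cdot N_1 = A^2 N_1^{-1} L_1 L_2,
\end{equation*}
whose square root is $A N_1^{-1/2}(L_1 L_2)^{1/2}$ — a factor $A^{3/4}$ worse than claimed, so a naive count is not enough.

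To recover the sharp $A^{1/4}$ I would refine the $\xi_1$-localization: the extra saving must come from the $\xi$-variable. On the $u$-support, $|\xi_1| \sim N_1$ and $|\eta_1| \sim A^{-1}N_1$; combined with the constraint $\psi_{N_1}(|(\xi_1,\eta_1)|) \ne 0$, i.e. $|(\xi_1,\eta_1)| \sim N_1$, the $\xi_1$ variable is actually confined to an interval of length $\sim A^{-1}N_1$ near $\pm N_1$ (because $\xi_1^2 = N_1^2 - \eta_1^2 + O(\cdots)$ and $|\eta_1|^2 \sim A^{-2}N_1^2$ forces $|\xi_1|$ within $O(A^{-1}N_1)$ of a fixed value once... ) — more carefully, after the ball decomposition the effective range of $\xi_1$ consistent with both the annulus $|(\xi_1,\eta_1)|\sim N_1$ and a fixed center is of size $\lesssim A^{-1}N_1$ plus the transversal window; one then optimizes the decomposition scale. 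Balancing the ball radius $r$ against these two windows should yield $|E| \lesssim A^{1/2}N_1^{-1}L_1L_2$, giving \eqref{bilinear-trans01-case3-2}. \textbf{The main obstacle} is precisely this optimization: extracting the genuine $A^{1/4}$ rather than the trivial $A^{1}$ requires using that \emph{both} the curvature-in-$\eta$ transversality and the geometric constraint $|\xi_1|\sim N_1$, $|\eta_1|\sim A^{-1}N_1$ pin down the $(\xi_1,\eta_1)$ support to a thin curved region, and one must choose the orientation of the anisotropic ball decomposition so that the resonance hypersurface cuts this region efficiently — analogous to the first-variable argument in Proposition \ref{bilinear-transversal} but now with the roles of the scales $N_1$ and $A^{-1}N_1$ interleaved.
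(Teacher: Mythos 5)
There is a genuine gap. Your setup (Plancherel, Cauchy--Schwarz, reduction to $|E(\tau,\xi,\eta)|\lesssim A^{1/2}N_1^{-1}L_1L_2$) matches the paper, and your computation correctly shows that using only the $\eta$-transversality $|\partial_2\varphi(\xi_1,\eta_1)-\partial_2\varphi(\xi-\xi_1,\eta-\eta_1)|\sim A^{-2}N_1^2$ together with the trivial range $|\xi_1|\lesssim N_1$ is off by a factor $A^{3/2}$ in $|E|$. But the repair you sketch does not work, and the missing idea is different from the one you are reaching for. First, the claim that the annulus constraint $|(\xi_1,\eta_1)|\sim N_1$ together with $|\eta_1|\sim A^{-1}N_1$ pins $\xi_1$ to an interval of length $\sim A^{-1}N_1$ is false: $P_{N_1}$ localizes to a \emph{dyadic} annulus of thickness $\sim N_1$, so $\xi_1$ still ranges over an interval of length $\sim N_1$; there is no thin-annulus relation $\xi_1^2=N_1^2-\eta_1^2+O(\cdots)$ to exploit. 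Second, even if such a window of length $A^{-1}N_1$ were available, you would get $|E|\lesssim \min(L_1,L_2)\cdot A^{-1}N_1\cdot A^{2}N_1^{-2}\max(L_1,L_2)=A\,N_1^{-1}L_1L_2$, whose square root gives $A^{1/2}$ in \eqref{bilinear-trans01-case3-2}, still short of $A^{1/4}$. You explicitly leave the "optimization" unresolved, and as outlined it cannot close.

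The actual mechanism is a dichotomy on the size of $|\xi_1^2-(\xi-\xi_1)^2|$ against the threshold $A^{-3/2}N_1^2$. If $|\xi_1^2-(\xi-\xi_1)^2|\gg A^{-3/2}N_1^2$, one uses transversality in the \emph{first} variable, $|\partial_1\varphi(\xi_1,\eta_1)-\partial_1\varphi(\xi-\xi_1,\eta-\eta_1)|\gtrsim A^{-3/2}N_1^2$, to confine $\xi_1$ (for fixed $\eta_1$) to a set of measure $\lesssim A^{3/2}N_1^{-2}\max(L_1,L_2)$, while $\eta_1$ ranges over $\lesssim A^{-1}N_1$; this yields $|E|\lesssim A^{1/2}N_1^{-1}L_1L_2$. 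If instead $|\xi_1^2-(\xi-\xi_1)^2|\lesssim A^{-3/2}N_1^2$, then since one of $|\xi_1+(\xi-\xi_1)|$, $|\xi_1-(\xi-\xi_1)|$ is $\sim N_1$, the other is $\lesssim A^{-3/2}N_1$, so after an almost-orthogonal decomposition $\xi_1$ lives in an interval of length $A^{-3/2}N_1$; combining this with your $\eta$-transversality window $A^{2}N_1^{-2}\max(L_1,L_2)$ again gives $|E|\lesssim A^{1/2}N_1^{-1}L_1L_2$. The threshold $A^{-3/2}N_1^2$ is chosen precisely so the two cases balance, and this is the step absent from your proposal.
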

\begin{proof}
By Plancherel's theorem, it suffices to show
\begin{equation}
\begin{split}
& \left\| \int \widehat{u}_{N_1,L_1} (\tau_1,\xi_1,\eta_1) 
\widehat{v}_{N_2, L_2}(\tau-\tau_1,\xi-\xi_1,\eta-\eta_1) d\sigma_1\right\|_{L_{\tau}^2 L_{\xi \eta}^2}\\
& \lesssim A^{\frac{1}{4}} N_1^{-\frac{1}{2}}
(L_1 L_2)^{\frac{1}{2}} \| u_{N_1, L_1} \|_{L_t^2 L_{xy}^2} 
\|v_{N_2, L_2}  \|_{L_t^2 L_{xy}^2},\label{bilinear-trans02-case3-2}
\end{split}
\end{equation}
where $\sigma_1 = d \tau_1 d \xi_1 d \eta_1$. 
The proof is divided into two cases. $|\xi_1^2 - (\xi - \xi_1)^2| \gg A^{-3/2} N_1^2$ and $|\xi_1^2 - (\xi - \xi_1)^2| \lesssim A^{-3/2} N_1^2$. Note that the latter condition means that either $|\xi_1+(\xi-\xi_1)|\lesssim A^{-3/2}N_1$ or $|\xi_1-(\xi-\xi_1)|\lesssim A^{-3/2}N_1$ holds. Thus, by the almost orthogonality, we can assume that $\xi_1$ is confined to an interval whose length is $A^{-3/2} N_1$. 
By following a standard argument, we observe that
\begin{align*}
\biggl\| \int  \ha{u}_{N_1, L_1} (\tau_1,\xi_1,\eta_1)   &
\ha{v}_{N_2, L_2}  (\tau-\tau_1,\xi-\xi_1,\eta-\eta_1) d\sigma_1 \biggr\|_{L^2}  \\
\leq & \left\|   \left(\left| 
\ha{u}_{N_1, L_1}  \right|^2 * 
\left|\ha{v}_{N_2, L_2}  \right|^2
 \right)^{1/2} |E(\tau, \xi,\eta )|^{1/2} \right\|_{L^2} \\
\leq & \sup_{\tau, \xi,\eta} |E(\tau, \xi,\eta )|^{1/2} 
 \left\| \left| 
 \ha{u}_{N_1, L_1} \right|^2 * 
\left|\ha{v}_{N_2, L_2}  \right|^2
\right\|_{L^1}^{1/2}\\
\leq & \sup_{\tau, \xi,\eta} |E(\tau, \xi,\eta )|^{1/2} 
\|\ha{u}_{N_1, L_1}\|_{L^2} 
\| \ha{v}_{N_2, L_2}  \|_{L^2},
\end{align*}
where $E(\tau, \xi,\eta) \subset \R^{3}$ is defined by
\begin{equation*}
E(\tau, \xi,\eta) := \{ (\tau_1, \xi_1,\eta_1) \in \supp \widehat{u}_{N_1,L_1}
\, | \, (\tau-\tau_1, \xi- \xi_1,\eta-\eta_1) \in \supp \widehat{v}_{N_2,L_2} \}.
\end{equation*}
Then it suffices to show $|E(\tau, \xi,\eta )| \lesssim A^{1/2}N_1^{-1} L_1 L_2$. 
The condition of the former case $|\xi_1^2 - (\xi - \xi_1)^2| \gg A^{-3/2} N_1$ implies 
$|\partial_{1} \varphi(\xi_1,\eta_1) - \partial_{1} \varphi(\xi-\xi_1, \eta-\eta_1)| \gtrsim A^{-3/2}N_1^2$. 
This and $|\eta_1| \sim A^{-1} N_1$ give $|E(\tau, \xi,\eta )| \lesssim A^{1/2}N_1^{-1} L_1 L_2$. 
Similarly, as above, the condition of the latter case allows us to assume that $\xi_1$ is confined to an interval whose length is $A^{-3/2} N_1$. 
This and $|\partial_2 \varphi(\xi_1,\eta_1) - \partial_2 \varphi(\xi-\xi_1, \eta-\eta_1)| \gtrsim A^{-2}N_1^2$ yield $|E(\tau, \xi,\eta )| \lesssim A^{1/2}N_1^{-1} L_1 L_2$.  
\end{proof}
\begin{proof}[Proof of \eqref{goal03-mth} under the conditions \textnormal{(2)}.]
For $1 \leq A \leq N_1 L_{\max}^{-8\e}$, assume
\begin{equation*}
A^{-1} N_1 \leq \max(|\eta_1|, |\eta_2|, |\eta_2|, |\eta_4|) \leq 2 A^{-1} N_1. 
\end{equation*}
Our goal is to establish
\begin{equation}
 \left| \int{ u_{N_1, L_1} u_{N_2, L_2} u_{N_3, L_3} u_{N_4, L_4}} dtdxdy \right| 
 \lesssim A^{\frac{1}{2}} N_1^{-1}  (L_1 L_2 L_3 L_4)^{\frac{1}{2}} \prod_{i=1}^{4} \|u_{N_i, L_i} \|_{L^2}.
\label{goal-case3-2}
\end{equation}
It is clear that \eqref{goal-case3-2} gives \eqref{goal03-mth} under the conditions (2). 
We divide the proof of \eqref{goal-case3-2} into the following three cases.

(2a) $|\eta_1| \sim |\eta_2| \sim |\eta_3| \sim |\eta_4|$,

(2b) $|\eta_1| \sim |\eta_2| \sim |\eta_3| \gg |\eta_4|$, 

(2c) $|\eta_1| \sim |\eta_2| \gg |\eta_3| \gtrsim |\eta_4|$.\\
The proofs are quite simple. The case (2a) can be treated by the following $L^4$ Strichartz estimate which is given by \eqref{Strichartz-1} with $p=q=4$.
\begin{equation}
\|u_{N_i,L_i} \|_{L_t^4 L_{xy}^4} \lesssim  A^{\frac{1}{8}}N_1^{-\frac{1}{4}}L_i^{\frac{1}{2}} \|u_{N_i,L_i}\|_{L^2}.
\label{est01-case3-2}
\end{equation}
The second case (2b) is handled by \eqref{est01-case3-2} and Proposition \ref{bilinear-transversal2}. 
To be precise, we use
\begin{align*}
& 
\|u_{N_1,L_1} \|_{L_t^4 L_{xy}^4} \lesssim  A^{\frac{1}{8}}N_1^{-\frac{1}{4}}L_1^{\frac{1}{2}} \|u_{N_1,L_1}\|_{L^2}, \ 
\|u_{N_2,L_2} \|_{L_t^4 L_{xy}^4} \lesssim  A^{\frac{1}{8}}N_1^{-\frac{1}{4}}L_2^{\frac{1}{2}} \|u_{N_2,L_2}\|_{L^2},\\
& \| u_{N_3,L_3} \, u_{N_4, L_4} \|_{L^2} \lesssim A^{\frac{1}{4}} N_1^{-\frac{1}{2}}
(L_3 L_4)^{\frac{1}{2}} \| u_{N_3, L_3} \|_{L^2}
\|u_{N_4, L_4} \|_{L^2}.
\end{align*}
For the last case, we employ Proposition \ref{bilinear-transversal2} which provides
\begin{align*}
& \| u_{N_1,L_1} \, u_{N_3, L_3} \|_{L^2} \lesssim A^{\frac{1}{4}} N_1^{-\frac{1}{2}}
(L_1 L_3)^{\frac{1}{2}} \| u_{N_1, L_1} \|_{L^2}
\|u_{N_3, L_3} \|_{L^2},\\
& \| u_{N_2,L_2} \, u_{N_4, L_4} \|_{L^2} \lesssim A^{\frac{1}{4}} N_1^{-\frac{1}{2}}
(L_2 L_4)^{\frac{1}{2}} \| u_{N_2, L_2} \|_{L^2}
\|u_{N_4, L_4} \|_{L^2}.
\end{align*}
These immediately establish \eqref{goal-case3-2}.
\end{proof}

We lastly consider the case (3). The following proposition plays a key role.
\begin{prop}\label{bilinear-transversal3}
Suppose that
\begin{align*}
& \supp \widehat{u}_{N_1,L_1} \subset G_{N_1,L_1} \cap 
\{(\tau,\xi,\eta) \, | \, |\xi|\sim N_1, \ |\eta| \leq L_{\max}^{8 \e}\},\\ 
& \supp \widehat{v}_{N_2,L_2} \subset G_{N_2,L_2} \cap 
\{(\tau,\xi,\eta) \, | \, |\xi|\sim N_1, \ |\eta|\leq L_{\max}^{8 \e}\},
\end{align*}
Then we have
\begin{equation}
\begin{split}
& \| u_{N_1,L_1} \, v_{N_2, L_2} \|_{L_t^2 L_{xy}^2}\\ 
& \lesssim  N_1^{-\frac{1}{4}}( L_{\max}^{-2\e}
(L_1 L_2)^{\frac{1}{2}} + L_{\max}^{10\e} \min (L_1, L_2)^{\frac{1}{2}}) \| u_{N_1, L_1} \|_{L_t^2 L_{xy}^2} 
\|v_{N_2, L_2}  \|_{L_t^2 L_{xy}^2}.\label{bilinear-trans01-case3-3}
\end{split}
\end{equation}
\end{prop}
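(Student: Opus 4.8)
The plan is to argue on the Fourier side, just as in the proofs of Propositions \ref{bilinear-transversal} and \ref{bilinear-transversal2}. By Plancherel's theorem together with the Cauchy--Schwarz inequality applied to the convolution $\ha{u}_{N_1,L_1}\ast\ha{v}_{N_2,L_2}$ — after suitable preliminary orthogonal decompositions of $\ha{u}_{N_1,L_1}$ and $\ha{v}_{N_2,L_2}$ in their output variables — the estimate \eqref{bilinear-trans01-case3-3} is reduced, on most of the frequency region, to a bound on the measure of the resonance set
\[
E(\tau,\xi,\eta):=\{(\tau_1,\xi_1,\eta_1)\in\supp\ha{u}_{N_1,L_1}\ |\ (\tau-\tau_1,\xi-\xi_1,\eta-\eta_1)\in\supp\ha{v}_{N_2,L_2}\}.
\]
Fixing $(\xi_1,\eta_1)$, the $\tau_1$-section of $E$ has measure $\lesssim\min(L_1,L_2)$ because of the two modulation localisations, so everything comes down to the measure of the admissible $(\xi_1,\eta_1)$. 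The constraints $|\tau_i-\xi_i^3-\eta_i^3|\lesssim L_i$ combine through $\tau=\tau_1+\tau_2$ into $|\tau-(\xi_1^3+(\xi-\xi_1)^3)-(\eta_1^3+(\eta-\eta_1)^3)|\lesssim L_{\max}$, and since $|\eta_1|,|\eta-\eta_1|\le L_{\max}^{8\e}$ the $\eta$-cubes perturb this only by $O(L_{\max}^{24\e})$, which is negligible against $L_{\max}$ once $\e$ is small; thus $\xi_1$ is pinned by $|\xi_1^3+(\xi-\xi_1)^3-c|\lesssim L_{\max}$ for a constant $c=c(\tau,\eta,\eta_1)$, while the $\eta_1$-section is trivially of size $\lesssim L_{\max}^{8\e}$.

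The algebraic heart of the argument is the identity $\xi_1^3+(\xi-\xi_1)^3=3\xi(\xi_1-\xi/2)^2+\xi^3/4$, which exposes the residual curvature: writing $w:=\xi_1-\xi/2=(\xi_1-\xi_2)/2$, the constraint becomes $3|\xi|\,w^2\in$ an interval of length $\lesssim L_{\max}$, while the frequency localisations $|\xi_1|\sim|\xi-\xi_1|\sim N_1$ force $\max(|\xi|,|w|)\sim N_1$; a direct computation then gives the $\xi_1$-measure $\lesssim\min\bigl(N_1,\ (L_{\max}/|\xi|)^{1/2},\ L_{\max}/(N_1|\xi|)\bigr)$. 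I would next decompose $u_{N_1,L_1}$ and $v_{N_2,L_2}$ dyadically in the size of $|\eta|$, and the output in $|\xi_1+\xi_2|$ (equivalently in $|\xi_1-\xi_2|$): whenever the two $\eta$-sizes separate, feed the pieces into Proposition \ref{bilinear-transversal2} with the maximal admissible $A=N_1L_{\max}^{-8\e}$, which supplies exactly the gain $A^{1/4}N_1^{-1/2}=N_1^{-1/4}L_{\max}^{-2\e}$; and whenever $|\xi_1-\xi_2|$ is not too small, the measure bound above together with the $\tau_1$- and $\eta_1$-bounds does the same job directly. In both cases the contribution is absorbed into the first term $N_1^{-1/4}L_{\max}^{-2\e}(L_1L_2)^{1/2}$ of \eqref{bilinear-trans01-case3-3}, the finitely many dyadic summations costing at most a logarithm, which is harmless against the available $L_{\max}^{\e}$-room.

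The remaining, and genuinely hard, piece is the doubly resonant one: $\xi_1\approx\xi_2$ or $\xi_1\approx-\xi_2$, with $|\eta_1|\sim|\eta_2|$ and both $\lesssim L_{\max}^{8\e}$, so that no transversality survives in either variable and the naive count on $|E|$ degrades. On this piece the plan is to give up recovering $(L_1L_2)^{1/2}$ and instead: decompose the output $x$-frequency $|\xi_1\pm\xi_2|$ dyadically so that on each shell one has a definite partial transversality $3|\xi_1^2-\xi_2^2|=3|\xi_1-\xi_2||\xi_1+\xi_2|$, sum the shell contributions, and balance this against the elementary bounds coming from the $L^4$ Strichartz estimate \eqref{Strichartz-1} and from Hölder's inequality in the form $\|u_{N_1,L_1}\,v_{N_2,L_2}\|_{L_t^2L_{xy}^2}\le\|u_{N_1,L_1}\|_{L_t^\infty L_{xy}^2}\|v_{N_2,L_2}\|_{L_t^2L_{xy}^\infty}$, where the small $(\xi,\eta)$-support of the second factor (of measure $\lesssim N_1L_{\max}^{8\e}$) and the modulation localisation of the first are exploited; interpolating these competing estimates over the dyadic range is what produces the weaker bound $N_1^{-1/4}L_{\max}^{10\e}\min(L_1,L_2)^{1/2}$. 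Since the two $\eta$-sizes are equal here, the sum over the dyadic $|\eta|$-pieces closes by a single Cauchy--Schwarz inequality, with no logarithmic loss. The main obstacle — and where I expect almost all of the effort to go — is precisely this last estimate: one must trade the small gain coming from the modulation constraint against the measure of the resonant region so that the power of $N_1$ lands exactly at $-1/4$ while only the harmless factor $L_{\max}^{10\e}$ is lost, and carefully tracking which of $|\xi_1-\xi_2|$, $|\xi_1+\xi_2|$, $|\eta_1|$, $|\eta_2|$ is the small quantity is essentially the whole of the bookkeeping.
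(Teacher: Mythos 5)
Your overall framework---Plancherel, Cauchy--Schwarz against the measure of the resonance set $E(\tau,\xi,\eta)$, the $\tau_1$-section of size $\min(L_1,L_2)$, the trivial $\eta_1$-measure $L_{\max}^{8\e}$, and the curvature identity $\xi_1^3+(\xi-\xi_1)^3=3\xi(\xi_1-\xi/2)^2+\xi^3/4$---is the right one and is the paper's. But the piece you explicitly defer is precisely the piece that produces the second term $N_1^{-1/4}L_{\max}^{10\e}\min(L_1,L_2)^{1/2}$, and the machinery you propose for it (dyadic shells in $|\xi_1\pm\xi_2|$, balancing against the $L^4$ Strichartz estimate \eqref{Strichartz-1} and an $L^2\times L^\infty$ H\"older bound, then interpolating) is not needed and is unlikely to land on the stated exponents. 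The actual argument is a one-line dichotomy with a single threshold: either $|\xi_1^2-(\xi-\xi_1)^2|\gg N_1^{1/2}L_{\max}^{12\e}$, in which case $|\partial_1\varphi(\xi_1,\eta_1)-\partial_1\varphi(\xi-\xi_1,\eta-\eta_1)|\gtrsim N_1^{1/2}L_{\max}^{12\e}$ pins $\xi_1$ to a set of measure $\lesssim \max(L_1,L_2)N_1^{-1/2}L_{\max}^{-12\e}$ and hence $|E|\lesssim \min(L_1,L_2)\cdot\max(L_1,L_2)N_1^{-1/2}L_{\max}^{-12\e}\cdot L_{\max}^{8\e}=N_1^{-1/2}L_{\max}^{-4\e}L_1L_2$ (the first term squared); or $|\xi_1^2-(\xi-\xi_1)^2|\lesssim N_1^{1/2}L_{\max}^{12\e}$, which, since $|\xi_1|\sim|\xi-\xi_1|\sim N_1$, confines $\xi_1$ to an interval of length $N_1^{-1/2}L_{\max}^{12\e}$, and then the completely trivial count $|E|\lesssim \min(L_1,L_2)\cdot N_1^{-1/2}L_{\max}^{12\e}\cdot L_{\max}^{8\e}$ is already the second term squared. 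No shell summation and no competing estimates are required in the resonant regime; the loss $L_{\max}^{10\e}$ and the drop to $\min(L_1,L_2)^{1/2}$ come for free from the trivial count.

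There is a second gap in your route to the first term. You propose to handle separated $\eta$-sizes by Proposition \ref{bilinear-transversal2} with $A=N_1L_{\max}^{-8\e}$, but that proposition requires $|\eta|\sim A^{-1}N_1=L_{\max}^{8\e}$ for the high-$\eta$ factor, so it only covers the top dyadic $\eta$-shell. For $|\eta_1|\sim 2^{-k}L_{\max}^{8\e}$ with $k\geq 1$ the corresponding $A$ exceeds the admissible range, and even formally the constant $A^{1/4}N_1^{-1/2}=N_1^{-1/4}|\eta_1|^{-1/4}$ degrades to $N_1^{-1/4}$ with no $L_{\max}^{-2\e}$ gain as $|\eta_1|\downarrow 1$. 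So $\eta$-transversality cannot supply the factor $L_{\max}^{-2\e}$ on most of the region. It does not have to: in the correct argument all of the gain comes from the $\xi$-variable, and $\eta_1$ is only ever estimated by its trivial measure $L_{\max}^{8\e}$.
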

\begin{proof}
The proof is almost the same as that for Proposition \ref{bilinear-transversal2}. We will establish
\begin{equation}
\begin{split}
& \left\| \int \widehat{u}_{N_1,L_1} (\tau_1,\xi_1,\eta_1) 
\widehat{v}_{N_2, L_2}(\tau-\tau_1,\xi-\xi_1,\eta-\eta_1) d\sigma_1\right\|_{L_{\tau}^2 L_{\xi \eta}^2}\\
& \lesssim N_1^{-\frac{1}{4}}( L_{\max}^{-2\e}
(L_1 L_2)^{\frac{1}{2}} + L_{\max}^{10\e} \min (L_1, L_2)^{\frac{1}{2}}) \| u_{N_1, L_1} \|_{L_t^2 L_{xy}^2} 
\|v_{N_2, L_2}  \|_{L_t^2 L_{xy}^2},\label{bilinear-trans02-case3-3}
\end{split}
\end{equation}
where $\sigma_1 = d \tau_1 d \xi_1 d \eta_1$. 
We consider two cases. $|\xi_1^2 - (\xi - \xi_1)^2| \gg  N_1^{1/2}L_{\max}^{12 \e}$ and $|\xi_1^2 - (\xi - \xi_1)^2| \lesssim  N_1^{1/2}L_{\max}^{12 \e}$. As we saw in the proof of Proposition \ref{bilinear-transversal2}, the latter condition means that we can assume that $\xi_1$ is confined to an interval whose length is $N_1^{-1/2}L_{\max}^{12 \e}$. 
In the same manner as in the proof of Proposition \ref{bilinear-transversal2}, we will show 
\begin{equation}
|E(\tau, \xi,\eta )| \lesssim N_1^{-\frac{1}{2}}( L_{\max}^{-4\e}
L_1 L_2 + L_{\max}^{20\e}\min (L_1, L_2))\label{est03-case3-3} 
\end{equation}
where $E(\tau, \xi,\eta) \subset \R^{3}$ is defined by
\begin{equation*}
E(\tau, \xi,\eta) := \{ (\tau_1, \xi_1,\eta_1) \in \supp \widehat{u}_{N_1,L_1}
\, | \, (\tau-\tau_1, \xi- \xi_1,\eta-\eta_1) \in \supp \widehat{v}_{N_2,L_2} \}.
\end{equation*}
For the former case, we have 
$|\partial_{1} \varphi(\xi_1,\eta_1) - \partial_{1} \varphi(\xi-\xi_1, \eta-\eta_1)| \gtrsim N_1^{1/2}L_{\max}^{12 \e}$ which, combined with $|\eta_1| \leq L_{\max}^{8 \e}$, 
gives $|E(\tau, \xi,\eta )| \lesssim N_1^{-1/2} L_{\max}^{-4\e} L_1 L_2$. 
The latter term can be handled by the inequalities $\mathcal{O}(\xi_1) \lesssim N_1^{-1/2}L_{\max}^{12 \e}$ and $|\eta_1| \leq L_{\max}^{8 \e}$, which yield 
$|E(\tau, \xi,\eta )| \lesssim N_1^{-1/2}L_{\max}^{20\e}\min (L_1, L_2)$.
\end{proof}
\begin{proof}[Proof of \eqref{goal03-mth} under the conditions \textnormal{(3)}.]
Suppose that $\e >0$ is sufficient small. By using Proposition \ref{bilinear-transversal3}, we easily obtain
\begin{align*}
\left| \int{ u_{N_1, L_1} u_{N_2, L_2} u_{N_3, L_3} u_{N_4, L_4}} dtdxdy \right| & \lesssim \|u_{N_1,L_1} u_{N_2, L_2}\|_{L_t^2 L_{xy}^2} \|u_{N_3,L_3}u_{N_4,L_4}\|_{L_t^2 L_{xy}^2}\\
& \lesssim N_1^{-\frac{1}{2}} (L_1 L_2 L_3 L_4)^{\frac{1}{2}-\e} \prod_{i=1}^{4} \|u_{N_i, L_i} \|_{L^2}.
\end{align*}
This completes the proof of \eqref{goal03-mth}.
\end{proof}
\section{Preliminaries for Theorem \ref{mth2}}
In this section, we collect notations and estimates that we utilize in the proof of the key estimate for 
Theorem \ref{mth2}. 
We begin with the definitions of $U^p$ and $V^p$ spaces which were exploited in \cite{KT05}. 
The definitions and notations of $U^p$ and $V^p$ correspond to \cite{HHK09} and \cite{HHK09-2}. 
Let $u=u(t,x)$ with $(t,x) = (t,x_1,\ldots, x_d) \in \R \times \R^d$. 
$\F_t u$ and $\F_{x} u$ denote the Fourier transform of $u$ in time and space, respectively. 
$\F_{t, x} u = \ha{u}$ denotes the Fourier transform of $u$ in space and time. 
We define frequency and modulation projections $P_N$, $Q_L$ as
\begin{align*}
(\F_{x}^{-1}P_{N}u )(\xi ):= & \psi_{N}(|\xi| )(\F_{x}{u})(\xi ),\\
\widehat{Q_{L} u}(\tau ,\xi ):= & \psi_{L}(\tau - \xi_1 |\xi|^2)\widehat{u}(\tau ,\xi ),
\end{align*}
where $(\tau,\xi)=(\tau,\xi_1,\ldots,\xi_d) \in \R \times \R^d$ are time and space frequencies. 
Let $\mathcal{Z}$ be the set of finite partitions 
$-\infty = t_0 < t_1 \cdots < t_K = \infty$ and let $\mathcal{Z}_0$ be the set of finite partitions 
$-\infty < t_0 < t_1 \cdots < t_K \leq \infty$ and let $\mathcal{Z}_0$. 
We first define $U^p$ space. 
\begin{defn}[Definition 2.1. \cite{HHK09}] 
Let $1 \leq p < \infty$. 
For $\{t_k \}_{k=0}^K \in \mathcal{Z}$ and $\{ \phi_k \}_{k=0}^{K-1} \subset L^2$ with 
$\sum_{k=0}^{K-1} \| \phi_k \|_{L^2}^p =1$ and $\phi_0 =0$ we call the function $a : \R \to L^2$ given by
\begin{equation*}
a = \sum_{k=1}^K \chi_{[t_{k-1},t_k)} \phi_{k-1}
\end{equation*}
a $U^p$-atom. 
Furthermore, we define the atomic space
\begin{equation*}
U^p := \left\{ u = \sum_{j=1}^{\infty} \lambda_j a_j \, \biggl| \, a_j : U^p \textnormal{-atom}, \ 
\lambda_j \in \C \ \textnormal{such that } \sum_{j=1}^\infty |\lambda_j| < \infty \right\} 
\end{equation*}
with norm
\begin{equation*}
\| u \|_{U^p} := \inf \left\{ \sum_{j=1}^\infty |\lambda_j| \, \biggl| \, u = \sum_{j=1}^{\infty} \lambda_j a_j , \ 
\lambda_j \in \C, \ a_j : U^p \textnormal{-atom} \right\}.
\end{equation*}
\end{defn}
Next we define $V^p$ space.
\begin{defn}[Definition 2.3. \cite{HHK09} and \textnormal{(iii)}. \cite{HHK09-2}] 
Let $1 \leq p < \infty$. 
$V^p$ space is defined as the normed space of all functions $v:\R \to L^2$ such that 
$\lim_{t \to \pm} v(t)$ exist and for which the norm
\begin{equation*}
\| v \|_{V^p} := \sup_{ \{t_k\}_{k=0}^{K} \in \mathcal{Z}} 
\left( \sum_{k=1}^K \| v(t_k) - v(t_{k-1}) \|_{L_x^2}^p \right)^{1/p}
\end{equation*}
is finite, where we use the convention that $v( - \infty)= \lim_{t \to -\infty}v(t)$ and $v (\infty) = 0$. 
Likewise, let $V^p_{-}$ denote the closed subspace of all $v \in V^p$ with $\lim_{t \to - \infty} v(t) =0$.
\end{defn}
For the properties of $U^p$ and $V^p$ spaces, see Propositions 2.2 and 2.4 in \cite{HHK09}, respectively. 
See also \cite{HHK09-2}. 

We next introduce the important connection between $U^p$ and $V^p$.
\begin{prop}[Theorem 2.8 and Proposition 2.10. \cite{HHK09}]\label{prop-duality}
Let $1<p<\infty$, $u \in V^1_{-}$ be absolutely continuous on compact intervals and $v \in V^{p'}$. 
Then,
\begin{equation*}
\| u \|_{U^p} = \sup_{\|v \|_{V^{p'}} =1} \left| \int_{-\infty}^\infty \LR{u' (t), v(t)}_{L_x^2} dt \right|.
\end{equation*}
\end{prop}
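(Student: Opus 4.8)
The plan is to obtain the identity as a consequence of the Banach space duality $(U^p)^*\cong V^{p'}$, realized through a bilinear form which, on the subspace of absolutely continuous functions, coincides with the integral pairing $\int\langle u'(t),v(t)\rangle_{L^2_x}\,dt$. Concretely, I would first define $B(a,v)$ for a $U^p$-atom $a=\sum_{k=1}^{K}\chi_{[t_{k-1},t_k)}\phi_{k-1}$ and $v\in V^{p'}$ by the summation-by-parts expression $B(a,v)=\sum_{k=1}^{K}\langle \phi_{k-1},\,v(t_{k-1})-v(t_k)\rangle_{L^2_x}$ (using the conventions $v(-\infty)=\lim_{t\to-\infty}v(t)$, $v(\infty)=0$ and $\phi_0=0$), and extend $B(\cdot,v)$ to all of $U^p$ through the atomic decomposition. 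The proof then has two ingredients: (I) $v\mapsto B(\cdot,v)$ is an isometric isomorphism from $V^{p'}$ onto $(U^p)^*$; (II) every $u\in V^1_-$ that is absolutely continuous on compact intervals lies in $U^p$ and satisfies $B(u,v)=\int_{-\infty}^{\infty}\langle u'(t),v(t)\rangle_{L^2_x}\,dt$ for all $v\in V^{p'}$. Granting these, for such $u$ the abstract norm identity $\|u\|_{U^p}=\sup_{\|\ell\|_{(U^p)^*}=1}|\ell(u)|$ rewrites, via (I), as $\|u\|_{U^p}=\sup_{\|v\|_{V^{p'}}=1}|B(u,v)|$, and (II) replaces $B(u,v)$ by the integral, giving the claim.

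For the bounded part of (I), apply the summation-by-parts formula and H\"older's inequality in the partition index: $|B(a,v)|\le\sum_k\|\phi_{k-1}\|_{L^2}\|v(t_{k-1})-v(t_k)\|_{L^2}\le\big(\sum_k\|\phi_{k-1}\|^p\big)^{1/p}\big(\sum_k\|v(t_{k-1})-v(t_k)\|^{p'}\big)^{1/p'}\le\|v\|_{V^{p'}}$, since $\sum_k\|\phi_{k-1}\|^p=1$. Taking the infimum over atomic decompositions yields $|B(u,v)|\le\|u\|_{U^p}\|v\|_{V^{p'}}$, so $B(\cdot,v)\in(U^p)^*$ with operator norm $\le\|v\|_{V^{p'}}$.

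The reverse inequality and the surjectivity are the main obstacle. For the reverse inequality, given $v$ with $\|v\|_{V^{p'}}=1$ and $\varepsilon>0$, pick a partition nearly realizing the supremum; after a small perturbation pushing the leftmost point far to the left one may assume the contribution of the first subinterval is negligible, so that the choice $\phi_0=0$ is harmless and the remaining $\phi_{k-1}=c\,\|v(t_{k-1})-v(t_k)\|^{p'-2}(v(t_{k-1})-v(t_k))$, with $c$ fixed by $\sum_k\|\phi_{k-1}\|^p=1$, produces a $U^p$-atom $a$ with $B(a,v)=c\sum_k\|v(t_{k-1})-v(t_k)\|^{p'}=\big(\sum_k\|v(t_{k-1})-v(t_k)\|^{p'}\big)^{1/p'}\ge(1-\varepsilon)^{1/p'}$ (using $p(p'-1)=p'$). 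Hence the operator norm of $B(\cdot,v)$ equals $\|v\|_{V^{p'}}$. For surjectivity, given $\ell\in(U^p)^*$, observe that $\chi_{[t,\infty)}\phi$ is $\|\phi\|_{L^2}$ times a $U^p$-atom, so $\phi\mapsto\ell(\chi_{[t,\infty)}\phi)$ is bounded on $L^2$ with norm $\le\|\ell\|$; by Riesz representation there is $v(t)\in L^2$ with $\langle\phi,v(t)\rangle=\ell(\chi_{[t,\infty)}\phi)$ and $\|v(t)\|\le\|\ell\|$. Testing $\ell$ against step functions built from a partition and unit vectors dual to $v(t_{k-1})-v(t_k)$, as in the reverse inequality, shows $\|v\|_{V^{p'}}\le\|\ell\|$, and by construction $\ell=B(\cdot,v)$ on atoms, hence on all of $U^p$.

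For (II), if $u\in V^1_-$ is absolutely continuous on compacts then $u(t)=\int_{-\infty}^{t}u'(s)\,ds$ and $\int_{-\infty}^{\infty}\|u'(s)\|_{L^2}\,ds$ equals the total variation of $u$, hence is finite. Writing $u=\int_{-\infty}^{\infty}\chi_{[s,\infty)}u'(s)\,ds$ as a Bochner integral and noting that $\chi_{[s,\infty)}u'(s)$ is $\|u'(s)\|_{L^2}$ times a $U^1$-atom, we get $u\in U^1\hookrightarrow U^p$ with $\|u\|_{U^1}\le\int\|u'(s)\|\,ds$. Moreover the two-interval summation-by-parts formula together with $v(\infty)=0$ gives $B(\chi_{[s,\infty)}\phi,v)=\langle\phi,v(s)\rangle$, so by continuity of $B(\cdot,v)$ and the Bochner representation $B(u,v)=\int_{-\infty}^{\infty}\langle u'(s),v(s)\rangle_{L^2_x}\,ds$, completing the argument. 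The delicate points are the lower bound and surjectivity half of (I) and the bookkeeping at the endpoints $\pm\infty$: the conventions $\phi_0=0$ and $v(\infty)=0$ are exactly what make the summation-by-parts identities, the atom count, and the endpoint perturbations all fit together.
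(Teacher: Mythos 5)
This proposition is not proved in the paper at all: it is imported verbatim from Hadac--Herr--Koch (Theorem 2.8 combined with Proposition 2.10 of \cite{HHK09}), so there is no in-paper argument to compare against. Your proposal is, in substance, a reconstruction of the proof from that reference: the same bilinear form $B$ defined by summation by parts on atoms, the same identification of $(U^p)^*$ with $V^{p'}$ via $v\mapsto B(\cdot,v)$ (with the conventions $\phi_0=0$ and $v(\infty)=0$ playing exactly the role you assign them), and the same passage from $B(u,v)$ to $\int\langle u'(t),v(t)\rangle_{L^2_x}\,dt$ for $u\in V^1_-$ absolutely continuous on compact intervals, using $u=\int\chi_{[s,\infty)}u'(s)\,ds$ and $U^1\hookrightarrow U^p$. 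Your handling of the endpoint issue in the lower bound (pushing $t_1$ toward $-\infty$ so that discarding the first increment costs only $\varepsilon$, which is legitimate because $\lim_{t\to-\infty}v(t)$ exists by definition of $V^{p'}$) and of surjectivity via the Riesz representative of $\phi\mapsto\ell(\chi_{[t,\infty)}\phi)$ are both correct and are how the cited proof goes.

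The one step you pass over too quickly is the extension of $B(\cdot,v)$ ``through the atomic decomposition.'' The bound $|B(a,v)|\le\|v\|_{V^{p'}}$ on single atoms does not by itself make $B(u,v):=\sum_j\lambda_j B(a_j,v)$ well defined on $U^p$: an element of $U^p$ has many (generally infinite) atomic decompositions and its norm is an infimum over all of them, so one must show the value is independent of the decomposition; moreover the tails $u-\sum_{j\le n}\lambda_j a_j$ are not step functions, so a naive density argument is circular. In \cite{HHK09} this is resolved by defining $B(u,v)$ intrinsically as the limit of partition sums over the net of refinements and only afterwards checking agreement with the summation-by-parts formula on atoms. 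Alternatively, for the proposition at hand one can sidestep the issue by working only with the specific $u$ in the statement: define the pairing directly as the absolutely convergent integral (note $\int\|u'\|_{L^2}\,dt\le\|u\|_{V^1}<\infty$ and $\sup_t\|v(t)\|_{L^2}\lesssim\|v\|_{V^{p'}}$), prove the upper bound by approximating $u$ in $U^1$ by Riemann-sum step functions for which the H\"older estimate applies with an explicit finite decomposition, and get the lower bound from surjectivity plus continuity of $\ell$. With some version of this refinement or approximation argument supplied, your outline is complete and matches the source the paper cites.
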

The following definitions correspond to Definition 2.15 in \cite{HHK09}.
\begin{defn}
Let $S = -\partial_{x_1} \Delta$. We define\\
$ \, $(i) $U_{S}^p = e^{\cdot \, S} U^p$ with norm $\|u \|_{U_S^p} = \|e^{- \, \cdot \, S} u\|_{U^p}$,\\
(ii) $V_{S}^p = e^{\cdot \, S} V^p$ with norm $\|u \|_{V_S^p} = \|e^{- \, \cdot \, S} u\|_{V^p}$.
\end{defn}
Now we define the solution space $Y^s$ as the closure of all 
$u \in C(\R; H^s(\R^d) \cap \LR{\nabla_x}^{-s} U^2_S$ such that
\begin{equation*}
\| u \|_{Y^s} := \left( \sum_{N} N^{2 s} \| P_N u \|_{U^2_S}^2 \right)^{1/2} < \infty
\end{equation*}
We collect the fundamental estimates of the Zakharov-Kuznetsov equation. 
The first estimate is included in Proposition 1.3 in \cite{SchippaBOZK} which was obtained in the same way as for the Strichartz estimate of a higher dimensional version of the Benjamin-Ono equation. 
See the proof of Theorem 1.1 in \cite{HLRRW}. 
It should be noted that we can get the $L^4$ Strichartz estimate below by following the proof of Theorem 2 in \cite{GPS09}. See also \cite{Gru14}.
\begin{prop}\label{l4-strichartz-3d}
Let $d\geq 3$. Then we have
\begin{equation*}
\| e^{tS} \varphi \|_{L_t^4 L_x^4} \lesssim  \||\nabla|^{\frac{d-3}{4}} \varphi \|_{L_x^2}.
\end{equation*}
\end{prop}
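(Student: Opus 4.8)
\textbf{Proof plan for Proposition \ref{l4-strichartz-3d}.}

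The plan is to follow the classical $TT^*$ / interpolation route that produces the $L^4$ Strichartz estimate for dispersive equations whose symbol has a nondegenerate Hessian, exactly as in the Benjamin-Ono--type argument of \cite{GPS09} and its adaptation in \cite{Gru14}. Write the solution via its spatial Fourier representation $e^{tS}\varphi(x) = c\int_{\R^d} e^{i(x\cdot\xi + t\,\xi_1|\xi|^2)}\widehat{\varphi}(\xi)\,d\xi$, with phase $\phi(\xi) = \xi_1|\xi|^2$. By the standard reduction, $\|e^{tS}\varphi\|_{L^4_{t,x}}^2 = \|(e^{tS}\varphi)^2\|_{L^2_{t,x}}$, so after squaring and using Plancherel it suffices to bound the bilinear convolution: control
\begin{equation*}
\left\| \int_{\R^d} \widehat{\varphi}_1(\xi_1')\,\widehat{\varphi}_2(\xi-\xi_1')\,\delta\bigl(\tau - \phi(\xi_1') - \phi(\xi-\xi_1')\bigr)\,d\xi_1' \right\|_{L^2_{\tau,\xi}}
\end{equation*}
by $\||\nabla|^{(d-3)/4}\varphi_1\|_{L^2}\||\nabla|^{(d-3)/4}\varphi_2\|_{L^2}$ (then set $\varphi_1=\varphi_2=\varphi$). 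By Cauchy--Schwarz in the $\xi_1'$ variable on the resolution manifold, the key point becomes the pointwise bound on the measure of the set where the phase constraint is satisfied: for fixed $(\tau,\xi)$ the hypersurface $\{\xi_1' : \phi(\xi_1')+\phi(\xi-\xi_1')=\tau\}$ carries induced surface measure that is controlled once the relevant Hessian-type determinant is bounded below.

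The concrete steps I would carry out: (i) localize $\varphi$ to a dyadic frequency shell $|\xi|\sim N$ (the homogeneity of $\phi$ makes this natural and will produce the weight $N^{(d-3)/2}$ after summing, matching $|\nabla|^{(d-3)/4}$); (ii) compute $\nabla_{\xi_1'}\bigl(\phi(\xi_1')+\phi(\xi-\xi_1')\bigr)$ and the Hessian, observing that the function $H(\xi_1') := \phi(\xi_1')+\phi(\xi-\xi_1')$ has Hessian (in $\xi_1'$) whose determinant is a polynomial in the entries comparable to $N^{d}$ on the bulk of the shell --- this is the multidimensional analogue of the $|\xi_1+\xi_2|\ne 0$ transversality exploited in Proposition \ref{bilinear-transversal}; (iii) use the co-area formula together with this lower bound to estimate, for fixed $(\tau,\xi)$, the relevant $(d-1)$-dimensional surface measure by $\lesssim N^{d/2 - \text{(gain)}}$, i.e. to get a decay factor of $N^{1/2}$ relative to the trivial count; (iv) insert this into the Cauchy--Schwarz bound and sum the dyadic pieces using almost-orthogonality (which is legitimate because the modulation variable $\tau$ localizes the output frequencies), arriving at the claimed power $N^{(d-3)/4}$ per factor.

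The main obstacle is step (ii)--(iii): the Hessian of $\xi_1'\mapsto \phi(\xi_1')+\phi(\xi-\xi_1')$ degenerates on a lower-dimensional subvariety of each shell (where, roughly, $\xi_1'$ and $\xi-\xi_1'$ become parallel or where a coordinate of $\xi_1'$ vanishes), so one cannot simply invoke a uniform nondegeneracy. The fix, as in \cite{GPS09}, is to decompose the shell into angular/dyadic sectors according to the size of the smallest relevant coordinate or the angle between $\xi_1'$ and $\xi-\xi_1'$, prove the surface-measure bound with an $A$-dependent loss on each sector (in the spirit of Proposition \ref{bilinear-transversal2} here), and then sum the geometric series in $A$ — the powers are arranged precisely so that this summation converges and reproduces $(d-3)/4$. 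Once that bookkeeping is in place, the remainder is the routine $TT^*$ packaging, and the restriction $d\geq 3$ enters exactly because the exponent $(d-3)/4$ is then nonnegative, so no derivative loss with the wrong sign occurs.
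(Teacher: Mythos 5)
First, a point of comparison: the paper does not prove Proposition \ref{l4-strichartz-3d} at all --- it quotes it from Proposition 1.3 of \cite{SchippaBOZK} (obtained via the dispersive-decay route of \cite{HLRRW}) and notes that it also follows from Theorem 2 of \cite{GPS09}, cf. \cite{Gru14}. Your $TT^*$/bilinear reduction is exactly the strategy of the latter references, so the route is a reasonable one. The problem is that the quantitative heart of the argument is not carried out, and the exponents you do state do not check out. After Cauchy--Schwarz on the convolution supported on $\{\tau=\phi(\xi')+\phi(\xi-\xi')\}$, what you must prove for data localized to $|\xi'|\sim N$ is $\sup_{\tau,\xi}\int\delta(\tau-H(\xi'))\,d\xi'\lesssim N^{d-3}$ with $H(\xi')=\phi(\xi')+\phi(\xi-\xi')$, i.e. $\sup_{\tau,\xi}\int_{\{H=\tau\}\cap\{|\xi'|\sim N\}}|\nabla_{\xi'} H|^{-1}\,dS\lesssim N^{d-3}$. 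Your step (iii) instead targets ``$N^{d/2-\mathrm{gain}}$ with an $N^{1/2}$ gain over the trivial count,'' which equals $N^{d-3}$ only when $d=5$; this suggests the bookkeeping was not actually done. Moreover, for this level-set bound the relevant nondegeneracy is first order --- a lower bound on $|\nabla_{\xi'}H|$ entering through the co-area weight --- not the Hessian determinant of $H$; the Hessian of $\phi$ is what the stationary-phase/decay proof of \cite{HLRRW} needs, and your plan conflates the two mechanisms.

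Second, the entire content of the proposition sits precisely where the generic count fails. On the bulk of the shell $|\nabla H|\sim N^2$ and the level set has measure $\lesssim N^{d-1}$, so $N^{d-3}$ is immediate there with no gain needed; the work is in the regions where the transversality degenerates (e.g. $\xi'$ near $\xi/2$, and near the zero set of $\det D^2\phi(\xi')=(2\xi_1')^{d-2}\bigl(12\xi_1'^2-4\sum_{j\ge2}\xi_j'^2\bigr)$, which is nonempty in every dimension). Your proposal defers exactly this part to ``decompose into sectors as in \cite{GPS09} and sum the series,'' without exhibiting a sector-wise bound or verifying that the losses are summable --- which is the only thing a proof here would need to add beyond the citation the paper already gives. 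The closing remark that $d\ge3$ enters ``only because $(d-3)/4\ge0$'' also obscures the real issue: for $d=2$ the isotropic estimate fails near the flat directions of the symbol, which is why the paper symmetrizes the equation and uses the anisotropic substitute of Lemma \ref{thm2.1}. As it stands, then, the proposal is a correct outline of the known strategy but contains a genuine gap at its quantitative core.
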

\begin{rem}
By using Proposition 2.19 in \cite{HHK09}, the above $L^4$ Strichartz estimate yields
\begin{equation}
\| u \|_{L_t^4 L_x^4} \lesssim  \||\nabla|^{\frac{d-3}{4}} u \|_{U^4_S}.\label{l4-strichartz-u4}
\end{equation}
\end{rem}
The following bilinear transversal estimate can be found in \cite{SchippaBOZK}. 
\begin{prop}[Proposition 1.2 in \cite{SchippaBOZK} with $a=2$]\label{bilinear-transversal-d3}
Let $d\geq 2$, $N_2 \ll N_1$ and define $u_{N_1}=P_{N_1} u$, $v_{N_2} = P_{N_2} v$. 
Then we have
\begin{equation}
\| e^{t S} u_{N_1} \, e^{t S}v_{N_2} \|_{L_t^2 L_x^2} \lesssim N_1^{-1} N_2^{\frac{d-1}{2}} 
 \| u_{N_1} \|_{L_t^2 L_x^2} \|v_{N_2}  \|_{L_t^2 L_x^2}.\label{bi-trans-l2}
\end{equation}
\end{prop}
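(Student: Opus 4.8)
The plan is to follow the classical transversality argument that already appears in the proof of Proposition~\ref{bilinear-transversal} above, now carried out in $\R^{1+d}$ with the dispersion relation $\varphi(\xi) = \xi_1|\xi|^2 = \xi_1(\xi_1^2 + \cdots + \xi_d^2)$ of the Zakharov--Kuznetsov flow. First I would pass to the Fourier side: by Plancherel the estimate \eqref{bi-trans-l2} is equivalent to a convolution bound for $\widehat{u}_{N_1}$ restricted to the hypersurface $\{\tau = \varphi(\xi)\}$ and $\widehat{v}_{N_2}$ restricted to $\{\tau = \varphi(\xi)\}$; writing both inputs as measures carried by their characteristic surfaces, the $L^2$ norm of the product is controlled by $\sup_{\tau,\xi}|E(\tau,\xi)|^{1/2}$ times the two $L^2$ norms, where $E(\tau,\xi)$ is the set of $\xi_1$-frequencies of the first factor compatible with a fixed total $(\tau,\xi)$. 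This is exactly the scheme of the Cauchy--Schwarz step reproduced in the proof of Proposition~\ref{bilinear-transversal}; the only thing that changes is the geometry of $\varphi$.

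Next I would estimate $|E(\tau,\xi)|$. Fixing the output frequency $(\tau,\xi)$, the constraint is $\varphi(\xi^{(1)}) + \varphi(\xi - \xi^{(1)}) = \tau$ with $|\xi^{(1)}| \sim N_1$, $|\xi-\xi^{(1)}| \sim N_2 \ll N_1$. The gradient $\nabla\varphi(\zeta) = (3\zeta_1^2 + \zeta_2^2 + \cdots + \zeta_d^2,\ 2\zeta_1\zeta_2,\ \ldots,\ 2\zeta_1\zeta_d)$ has, on the high-frequency piece, size $\sim N_1^2$ in a direction that the low-frequency piece cannot cancel: since $N_2 \ll N_1$, one has $|\nabla\varphi(\xi^{(1)}) - \nabla\varphi(\xi^{(2)})| \gtrsim N_1^2$ for all $\xi^{(1)}$ in the support of the first factor and $\xi^{(2)}$ in the support of the second (this transversality is the $d$-dimensional analogue of the final displayed implication in the proof of Proposition~\ref{bilinear-transversal}, and is checked by a direct computation using $|\xi^{(1)}| \gg |\xi^{(2)}|$). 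After an almost-orthogonality reduction confining the supports to balls of radius $r \ll N_2$, the level set in the $\xi_1$-variable has length $\lesssim N_1^{-2}$ by this transversal lower bound, while the remaining $d-1$ transverse frequency variables each range over an interval of length $\lesssim N_2$; multiplying gives $|E(\tau,\xi)| \lesssim N_1^{-2} N_2^{d-1}$. Taking the square root yields the factor $N_1^{-1} N_2^{(d-1)/2}$ in \eqref{bi-trans-l2}. Note that here, unlike in Proposition~\ref{bilinear-transversal}, the two inputs are free solutions (no $Q_L$ localization), so there are no modulation losses $L_1, L_2$ and the estimate is clean.

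The main obstacle I expect is making the transversality bound $|\nabla\varphi(\xi^{(1)}) - \nabla\varphi(\xi^{(2)})| \gtrsim N_1^2$ genuinely uniform over the annular supports: one must rule out the degenerate configurations where the first component $3(\xi^{(1)}_1)^2 + |\xi'^{(1)}|^2$ nearly matches that of the second factor \emph{and} simultaneously the $2\xi_1\xi_j$ components match. Because $|\xi^{(1)}| \sim N_1 \gg N_2 \sim |\xi^{(2)}|$, the first component of $\nabla\varphi(\xi^{(1)})$ is already $\gtrsim N_1^2$ while that of $\nabla\varphi(\xi^{(2)})$ is $\lesssim N_2^2 \ll N_1^2$, so no cancellation is possible in that slot; this settles the issue without case analysis. (Alternatively, one can cite Proposition~1.2 of \cite{SchippaBOZK} directly, since the statement is exactly the $a=2$ specialization recorded there; the sketch above is the self-contained route.) With $|E|$ bounded, the remaining steps---Cauchy--Schwarz, almost orthogonality, and reassembling the dyadic pieces---are routine and identical in structure to the corresponding passages in the proof of Proposition~\ref{bilinear-transversal}.
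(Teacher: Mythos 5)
Your argument is correct, and it is essentially the standard route: the paper itself gives no proof of this proposition (it is quoted verbatim from Proposition~1.2 of \cite{SchippaBOZK}), but your sketch is exactly the $d$-dimensional, free-solution analogue of the Cauchy--Schwarz/transversality scheme used in the proof of Proposition~\ref{bilinear-transversal}, and your key observation --- that $\partial_1\varphi(\zeta)=3\zeta_1^2+|\zeta'|^2\ge|\zeta|^2$ forces $|\partial_1\varphi(\xi^{(1)})-\partial_1\varphi(\xi^{(2)})|\gtrsim N_1^2$ with no case analysis --- is the right one. The only point to formalize is the phrase ``the level set in the $\xi_1$-variable has length $\lesssim N_1^{-2}$'': since the inputs are free solutions, the space-time Fourier transform of the product carries a factor $\delta(\tau-\varphi(\xi^{(1)})-\varphi(\xi-\xi^{(1)}))$, and the $N_1^{-2}$ should be obtained as the Jacobian $|\partial_{\xi_1^{(1)}}\Phi|^{-1}$ from the coarea formula (or a change of variables $\xi_1^{(1)}\mapsto\Phi$) rather than as the length of a sublevel set; combined with the $d-1$ transverse variables ranging over intervals of length $\lesssim N_2$, this gives the stated $N_1^{-2}N_2^{d-1}$ bound.
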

\begin{rem}
By the same argument as of the proof of Corollary 2.21 in \cite{HHK09}, we can see that \eqref{bi-trans-l2} implies
\begin{equation}
\| u_{N_1} \, v_{N_2} \|_{L_t^2 L_x^2} \lesssim N_1^{-1} N_2^{\frac{d-1}{2}} 
 \| u_{N_1} \|_{U^2_S} \|v_{N_2}  \|_{U^2_S}.\label{bi-trans-u2}
\end{equation}
If $d \geq 3$, by interpolating the above bilinear estimate and 
\begin{equation*}
\| u_{N_1} \, v_{N_2} \|_{L_t^2 L_x^2} \lesssim (N_1 N_2)^{\frac{d-3}{2}} 
 \| u_{N_1} \|_{U^4_S} \|v_{N_2}  \|_{U^4_S},
\end{equation*}
which follows from \eqref{l4-strichartz-u4}, for any $\e >0$, we get 
\begin{equation}
\| u_{N_1} \, v_{N_2} \|_{L_t^2 L_x^2} \lesssim N_1^{-1+ \e} N_2^{\frac{d-1}{2}-\e} 
 \| u_{N_1} \|_{V^2_S} \|v_{N_2}  \|_{V^2_S}.\label{bi-trans-v2}
\end{equation}
\end{rem}
\section{Proof of key estimate for Theorem \ref{mth2}}
We show the following key estimate which immediately yields Theorem \ref{mth2}. 
We omit the proof of Theorem \ref{mth2} here and focus on the key estimate. 
To complete the proof of Theorem \ref{mth2}, see the proof of Theorem 1.1 in \cite{HHK09}.
\begin{thm}\label{goal-3d}
Let $T \in (0, \infty]$. We define the Duhamel term as
\begin{equation*}
I_T (u_1, u_2, u_3)(t) := \int_0^t \chi_{[0, T)}e^{(t-t')S} \partial_{x_1} (u_1 \, u_2 \, u_3)(t') d t'.
\end{equation*}
Then there exists $C>0$ such that
\begin{equation}
\| I_T (u_1, u_2, u_3)\|_{Y^{s_c}} \leq C \|u_1 \|_{Y^{s_c}} \| u_2 \|_{Y^{s_c}} \|u_3 \|_{Y^{s_c}}.
\end{equation}
\end{thm}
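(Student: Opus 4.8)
The plan is to reduce the multilinear estimate on $Y^{s_c}$ to a single frequency-localized four-linear estimate by standard techniques for the $U^p$--$V^p$ framework. First I would invoke Proposition \ref{prop-duality} to write
\[
\| I_T(u_1,u_2,u_3)\|_{Y^{s_c}} \sim \Bigl( \sum_{N_4} N_4^{2s_c} \| P_{N_4} I_T(u_1,u_2,u_3)\|_{U^2_S}^2 \Bigr)^{1/2}
\]
and express $\| P_{N_4} I_T(u_1,u_2,u_3)\|_{U^2_S}$ by duality against $v_4 \in V^2_S$ with $\|v_4\|_{V^2_S}=1$; unwinding the Duhamel integral and $S$-translations reduces matters, after dyadic decomposition $u_i = \sum_{N_i} P_{N_i} u_i$ and the substitution $\|u_i\|_{Y^{s_c}}$-expansion, to proving
\[
\Bigl| \int_{\R^{1+d}} \overline{v_{N_4}}\, \partial_{x_1}(u_{N_1} u_{N_2} u_{N_3})\, dt\,dx \Bigr| \lesssim N_{\max}^{\alpha} N_{\min}^{\beta} \prod_{i=1}^3 \|u_{N_i}\|_{V^2_S} \,\|v_{N_4}\|_{V^2_S}
\]
with exponents $\alpha,\beta$ chosen so that, after multiplying by $N_4$ (from $\partial_{x_1}$) and by $N_i^{s_c}$, $N_4^{-s_c}$ and summing over the dyadic parameters, the geometric series converges. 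Here $V^2_S \hookrightarrow U^p_S$ for $p>2$ lets us freely use \eqref{bi-trans-v2} and \eqref{l4-strichartz-u4} with an $\e$-loss that will be absorbed.

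The heart of the matter is then the frequency-localized estimate, which by symmetry I would analyze under $N_1 \gtrsim N_2 \gtrsim N_3$ and split into the cases $N_4 \ll N_1$, where necessarily $N_1 \sim N_2$, and $N_4 \sim N_1$. In the first case I pair the two highest-frequency factors against the two lowest: applying the bilinear transversal estimate \eqref{bi-trans-v2} to $\|u_{N_1} u_{N_3}\|_{L^2_{t,x}}$ and $\|u_{N_2} v_{N_4}\|_{L^2_{t,x}}$ (both are genuine transversal pairs since the high frequencies dominate), then Cauchy--Schwarz in the remaining integral, yields a bound $\lesssim N_1^{-2+\e}(N_3 N_4)^{(d-1)/2-\e}$ times the norms; multiplying by the $N_4$ from the derivative and the Sobolev weights $N_1^{s_c} N_2^{s_c} N_3^{s_c} N_4^{-s_c}$ gives a power of $N_3/N_1$ and $N_4/N_1$ strictly negative when $s_c = d/2-1$, so the sum converges. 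The delicate subcase is $N_1 \sim N_2 \sim N_3 \sim N_4$: there no pair is automatically transversal, and I would follow the strategy of Gr\"unrock \cite{Gru14} and of Section 3 above — localize each $\xi_i$ to sub-blocks according to which of the cross-differences $\partial_{\xi_1}\!\bigl(\xi_1|\xi|^2\bigr)$ separate, using the algebraic identity that when $|\xi_1| \sim \cdots \sim |\xi_4|$ at least two of the four gradients are transversal at scale $N_1^2$ unless the frequencies nearly coincide, in which case the resonance function is small and one instead invokes the $L^4$ Strichartz estimate \eqref{l4-strichartz-u4} four times, giving $\prod \|u_{N_i}\|_{L^4_{t,x}} \lesssim (N_1^{(d-3)/4})^4 \prod \|u_{N_i}\|_{U^4_S}$, i.e. $N_1^{d-3}$, which combined with the derivative and weights is $N_1^{1 + d-3 + 3s_c - s_c} = N_1^{d-2 + 2s_c} \cdot N_1^{-2s_c}$; being scaling-critical this is exactly balanced, so one needs the $\e$-gain from the transversal pieces and a Whitney-type summation over the relative angles of the $\eta$-variables to close.

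I expect the main obstacle to be precisely this highest-resonance regime $N_1 \sim N_2 \sim N_3 \sim N_4$ at the scaling-critical exponent: the naive Strichartz bound is exactly borderline, so one cannot afford any loss, and the argument must combine the transversal estimate \eqref{bi-trans-v2} (which has the crucial $N_1^{-1}$ from $\partial_{x_1}$-dispersion but loses $N_2^{(d-1)/2}$ when $N_2$ is comparable) with a careful decomposition of the frequency support into angularly separated caps so that on each cap two of the four factors become transversal. Making the cap decomposition and the ensuing almost-orthogonality rigorous — in particular verifying that the number of interacting cap-tuples is summable and that the $\e$-losses in \eqref{bi-trans-v2} and \eqref{l4-strichartz-u4} can be traded against a genuine gain $N_{\min}/N_{\max}$ or an angular smallness — is the technically demanding step, whereas the cases with a genuine frequency gap ($N_4 \ll N_1$) are routine applications of the bilinear transversal estimate and Cauchy--Schwarz.
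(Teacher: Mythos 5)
Your overall architecture (duality via Proposition \ref{prop-duality}, dyadic decomposition, reduction to a frequency-localized four-linear estimate, then a case analysis combining the bilinear transversal estimate \eqref{bi-trans-v2} with the $L^4$ Strichartz estimate \eqref{l4-strichartz-u4}) is the same as the paper's. But there are two concrete problems. First, in your regime $N_4\ll N_1$ you assert that both pairs $(u_{N_1},u_{N_3})$ and $(u_{N_2},v_{N_4})$ are ``genuine transversal pairs since the high frequencies dominate.'' This is false in the subcase $N_1\sim N_2\sim N_3\gg N_4$: there $u_{N_1}$ and $u_{N_3}$ have comparable frequencies, Proposition \ref{bilinear-transversal-d3} (which requires $N_2\ll N_1$) does not apply to that pair, and no transversality is available since the group velocities $\nabla(\xi_1|\xi|^2)$ can coincide. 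The correct treatment, which is the paper's Case 2, is to apply the bilinear estimate only to the genuinely separated pair $u_{N_1}u_{N_4}$ and to put the remaining two comparable-frequency factors in $L^4_{t,x}$ via \eqref{l4-strichartz-u4}; this yields $N_1^{(d-5)/2+\e}N_4^{(d-1)/2-\e}$, which still carries a gain $(N_4/N_1)^{1/2-\e}$ over the critical balance and sums.

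Second, your treatment of the diagonal case $N_1\sim N_2\sim N_3\sim N_4$ misdiagnoses the difficulty and does not close. You correctly compute that four applications of \eqref{l4-strichartz-u4} give $N_1^{d-3}$, which is exactly scaling-critical, but then conclude that one ``needs the $\e$-gain from the transversal pieces and a Whitney-type summation over the relative angles'' and leave this cap decomposition as the unproved ``technically demanding step.'' No such gain is needed: on the diagonal all four dyadic parameters are comparable, so the summation collapses to a single sum over $N$, and the exactly balanced power of $N$ is harmless because the $\ell^2$ (respectively $\ell^\infty$) structure of the $Y^{s_c}$ norm closes $\sum_N\prod_i\bigl(N^{s_c}\|P_Nu_i\|_{V^2_S}\bigr)$ by H\"older in the dyadic index. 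A gain in $N_{\min}/N_{\max}$ is only required off the diagonal, where Cases 1 and 2 already supply it. So the angular/cap machinery you import from the two-dimensional $X^{s,b}$ argument of Section 3 is unnecessary here for $d\geq3$, and since you do not actually carry it out, the case you identify as the heart of the matter is left open in your write-up, while the case you dismiss as routine ($N_1\sim N_2\sim N_3\gg N_4$) is where your stated argument actually breaks.
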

By using Proposition \ref{prop-duality} above and Proposition 2.4 in \cite{HHK09}, it suffices to show the following proposition.
\begin{prop}\label{goal2-3d}
Let $N_1 \geq N_2 \geq N_3 \geq N_4$ and $u_{N_i}=P_{N_i} u_i$ where $i=1,2,3,4$. 
Then we have
\begin{equation*}
\left| \int{ u_{N_1} u_{N_2} u_{N_3} u_{N_4}} dtdx \right| 
\lesssim N_1^{-\frac{3}{2}+\e} N_3^{\frac{d-2}{2}} N_4^{\frac{d-1}{2}-\e} \prod_{i=1}^4 \| u_{N_i} \|_{V^2_S}.
\end{equation*}
\end{prop}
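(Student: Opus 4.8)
The statement is itself the frequency-localized quadrilinear estimate, so I would work directly from it. The plan is to pair the four factors by the Cauchy--Schwarz inequality into two $L^2_{t,x}$ bilinear pieces and control each by the tools of Section~4, namely the $L^4$ Strichartz bound \eqref{l4-strichartz-u4} and the transversal bilinear estimate \eqref{bi-trans-v2}. Before the case analysis there are two free reductions: the integral vanishes unless the spatial frequencies can sum to zero, which together with $N_1 \geq N_2 \geq N_3 \geq N_4$ forces $N_1 \sim N_2$, so we may assume $N_1 \sim N_2$ (and, consistently, the claimed bound does not involve $N_2$); and the case $N_1 \sim 1$ is immediate from four applications of \eqref{l4-strichartz-u4}, so we may also assume $N_1 \gg 1$. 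It then remains to treat three regimes according to the sizes of $N_3,N_4$ relative to $N_1$.

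In the regime $N_3 \ll N_1$ one also has $N_4 \le N_3 \ll N_1 \sim N_2$, so pairing $(u_{N_1}, u_{N_3})$ with $(u_{N_2}, u_{N_4})$ and applying \eqref{bi-trans-v2} to each factor gives
\begin{equation*}
\left| \int u_{N_1} u_{N_2} u_{N_3} u_{N_4}\, dt\, dx \right| \lesssim N_1^{-2+2\e}\, N_3^{\frac{d-1}{2}-\e}\, N_4^{\frac{d-1}{2}-\e} \prod_{i=1}^4 \|u_{N_i}\|_{V^2_S},
\end{equation*}
and since $1 \le N_3 \le N_1$ and $\e<1/2$ we have $N_1^{-1/2+\e} \le N_3^{-1/2+\e}$, which is exactly the factor needed to absorb the right-hand side into the desired bound. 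In the regime $N_3 \sim N_1 \gg N_4$ I would instead pair $(u_{N_1}, u_{N_4})$, which is transversal and handled by \eqref{bi-trans-v2}, with $(u_{N_2}, u_{N_3})$, both of frequency $\sim N_1$, estimated by \eqref{l4-strichartz-u4} on each factor followed by the embedding $V^2_S \hookrightarrow U^4_S$; counting the powers of $N_1$ and $N_4$ shows the result matches the target with equality in the exponents.

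The remaining regime $N_1 \sim N_2 \sim N_3 \sim N_4$ is the main obstacle. Here there is no frequency separation to exploit, and four $L^4$ Strichartz estimates produce only $N_1^{d-3}$, overshooting the required $N_1^{d-3-\e}$ by a power $N_1^{\e}$, so a small gain must be recovered. The plan mirrors the resolution of Case~3 in Section~3: one first decomposes the shell $\{|\xi| \sim N_1\}$ into smaller boxes, and then, for the majority of configurations of the four frequencies, one of the three pairings is transversal up to a factor $N_1^{\delta}$ with $\delta$ small --- i.e. the gradients of the phase $\xi \mapsto \xi_1|\xi|^2$ of $S$ at the two members differ by $\gtrsim N_1^{2-\delta}$ --- which, by the volume-counting argument behind Propositions \ref{bilinear-transversal2} and \ref{bilinear-transversal3} (carried out first for free solutions as in Proposition \ref{bilinear-transversal-d3} and then transferred to $V^2_S$ through the atomic structure of $U^p$), yields a bilinear $L^2$ estimate with an $N_1^{-1/2+O(\delta)}$ gain; pairing this with an $L^4$ estimate on the other two factors and balancing $\delta$ against $\e$ closes these configurations. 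The crux is the complementary set of configurations, where the frequencies cluster so that every pairing is near-resonant for the phase $\xi_1|\xi|^2$ (this occurs along the antipodal/reflected locus and when the frequencies concentrate near the hyperplane $\{\xi_1=0\}$): there one performs a further dyadic decomposition in the radial and $e_1$-directions and recovers the shortfall from the weaker, second-order transversality available transverse to $e_1$, in direct analogy with the way the two-dimensional argument compensates the degeneracy in one coordinate by second-order transversality in the other.
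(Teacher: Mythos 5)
Your treatment of Cases 1 and 2 ($N_1\sim N_2\gg N_3$ and $N_1\sim N_2\sim N_3\gg N_4$) is exactly the paper's argument: the same pairings, \eqref{bi-trans-v2} on the separated pair, \eqref{l4-strichartz-u4} on the comparable factors, and the same bookkeeping with $N_1^{-1/2+\e}\le N_3^{-1/2+\e}$. The problem is Case 3. You assert that four $L^4$ Strichartz estimates give $N_1^{d-3}$ while the target is $N_1^{d-3-\e}$, and you then sketch an elaborate (and only heuristic) program of angular/radial decompositions and near-resonance analysis to recover an $N_1^{-\e}$ gain. But the target exponent is miscomputed: when $N_3\sim N_4\sim N_1$ the right-hand side of the proposition is $N_1^{-3/2+\e}N_1^{(d-2)/2}N_1^{(d-1)/2-\e}$, and the two $\e$'s cancel, leaving exactly $N_1^{d-3}$. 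So Case 3 closes immediately with four applications of \eqref{l4-strichartz-u4} (together with $V^2_S\hookrightarrow U^4_S$), which is precisely what the paper does; there is no shortfall to recover. Note that the $\e$-gain in the statement is deliberately attached only to $N_1$ and $N_4$, i.e.\ it is harvested from the transversal pairings in Cases 1 and 2, and is not demanded in the fully balanced case.

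As written, your proof is therefore incomplete where you declare the ``crux'' to lie: the proposed refinement of Case 3 is not carried out (the claimed transversality-up-to-$N_1^{\delta}$ for ``most'' configurations and the treatment of the antipodal/$\{\xi_1=0\}$ clustering are not substantiated, and the transfer of such refined bilinear estimates to $V^2_S$ is nontrivial). Fortunately none of it is needed. Correct the exponent count in Case 3, delete the extra machinery, and the remainder of your argument coincides with the paper's proof.
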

\begin{proof}
We divide the proof into three cases.

Case 1: $N_1 \sim N_2 \gg N_3 \gtrsim N_4$,

Case 2: $N_1 \sim N_2 \sim N_3 \gg N_4$,

Case 3: $N_1 \sim N_2 \sim N_3 \sim N_4$.\\
\underline{Case 1} The first case can be handled by \eqref{bi-trans-v2} as follows.
\begin{align*}
\left| \int{ u_{N_1} u_{N_2} u_{N_3} u_{N_4}} dtdx \right|  & \lesssim 
\|u_{N_1} u_{N_3}\|_{L_t^2 L_{x}^2} \|u_{N_2}u_{N_4}\|_{L_t^2 L_{x}^2}\\
& \lesssim N_1^{-2 + 2\e} N_3^{\frac{d-1}{2}-\e} N_4^{\frac{d-1}{2}-\e} \prod_{i=1}^{4} \|u_{N_i} \|_{V^2_S}.
\end{align*}
\underline{Case 2} It follows from \eqref{l4-strichartz-u4} and \eqref{bi-trans-v2} that
\begin{align*}
\left| \int{ u_{N_1} u_{N_2} u_{N_3} u_{N_4}} dtdx \right|  & \lesssim 
\|u_{N_1} u_{N_4}\|_{L_t^2 L_{x}^2} \|u_{N_2} \|_{L_t^4 L_x^4} \|u_{N_3}\|_{L_t^4 L_x^4}\\
& \lesssim N_1^{\frac{d-5}{2} + \e} N_4^{\frac{d-1}{2}-\e} \prod_{i=1}^{4} \|u_{N_i} \|_{V^2_S}.
\end{align*}
\underline{Case 3} Lastly, \eqref{l4-strichartz-u4} gives
\begin{align*}
\left| \int{ u_{N_1} u_{N_2} u_{N_3} u_{N_4}} dtdx \right|  & \lesssim 
\|u_{N_1} \|_{L_t^4 L_x^4} \|u_{N_2}\|_{L_t^4 L_x^4} \|u_{N_3} \|_{L_t^4 L_x^4} \|u_{N_4}\|_{L_t^4 L_x^4}\\
& \lesssim N_1^{d-3} \prod_{i=1}^{4} \|u_{N_i} \|_{V^2_S},
\end{align*}
which completes the proof of Theorem \ref{goal2-3d}.
\end{proof}

\end{document}